\documentclass[12pt]{article}
\usepackage{booktabs}
\usepackage{caption}
\usepackage{mathrsfs}
\usepackage{amsmath}
\usepackage{amsfonts,amsthm,amssymb,mathrsfs,bbding}
\usepackage{graphics,multicol}
\usepackage{graphicx}
\usepackage{color}
\usepackage{float}
\usepackage{enumitem}
\usepackage{subfigure}
\usepackage{enumerate}
\usepackage{caption}
\usepackage{rotating}
\usepackage{lscape}
\usepackage{longtable}
\allowdisplaybreaks[4]
\usepackage{tabularx}
\usepackage[colorlinks=true,anchorcolor=blue,filecolor=blue,linkcolor=blue,urlcolor=blue,citecolor=blue]{hyperref}
\usepackage{extarrows}
\usepackage{cite}
\usepackage{tikz}
\usepackage{latexsym,bm}
\usepackage{mathtools}
\evensidemargin=\oddsidemargin\topmargin=-1.5cm

\newtheorem{thm}{Theorem}
\newtheorem{defi}{Definition}

\newtheorem{lem}[thm]{Lemma}
\newtheorem{cor}[thm]{Corollary}

\newtheorem{claim}{Claim}
\theoremstyle{definition}

\addtocounter{section}{0}
\begin{document}

	\title{\bf A lifting theorem for generalized Tur\'an numbers of triangles}

	\author{{Junjie Wang\footnote{\emph{E-mail address:} wangjunjie2827@163.com}, Yaoping Hou\footnote{Corresponding author.}\setcounter{footnote}{-1}\footnote{\emph{E-mail address:} yphou@hunnu.edu.cn}}\\[2mm]
\small College of Mathematics and Statistics, Hunan Normal University,\\
		\small Changsha, Hunan, 410081, China\\[2mm]
		}

	\date{}
	\maketitle
	{\flushleft\large\bf Abstract } For graphs $H$ and $F$, the generalized Tur\'an number $\operatorname{ex}(n,H,F)$ denotes the maximum number of copies of $H$ in an $n$-vertex $F$-free graph. We prove a general lifting principle for the case $H=K_3$ and the forbidden graph is a vertex-disjoint union of several copies of a graph. The key hypothesis is a local neighborhood-forcing condition: there is a graph $R$ with $\operatorname{ex}(n,R)=o(n^2)$ such that $F\subseteq K_1\nabla R$. Under this condition, the corresponding single-forbidden-graph asymptotics, together with a construction attaining the relevant extremal triangle and edge densities simultaneously, lift to an asymptotic value for \(\operatorname{ex}(n,K_3,(s+1)F)\) for every integer \(s \). We also prove an exact version in terms of the maximum value of a weighted expression over all graphs of a given size that avoid the forbidden graph. As applications, we obtain exact or asymptotic results for disjoint unions of suspensions of paths and stars. We also recover known exact results for disjoint odd cycles.
	\begin{flushleft}

\textbf{AMS classification}: 05C35\\

		\textbf{Keywords:} Generalized Tur\'an number; triangle; vertex-disjoint copies; suspension 
	\end{flushleft}

	\section{Introduction}

   A central problem in extremal graph theory asks for the maximum number of edges in an $n$-vertex graph avoiding a forbidden graph. This is the classical Tur\'an number $\operatorname{ex}(n,F)$, where $F$ denotes a forbidden subgraph. One natural extension is to maximize the number of copies of a graph $H$ rather than edges. Given two graphs $H$ and $F$, let $\operatorname{ex}(n,H,F)$ denote the maximum number of copies of $H$ in an $n$-vertex $F$-free graph. When $H=K_2$, this reduces to the ordinary Tur\'an number $\operatorname{ex}(n,F)$. The systematic study of $\operatorname{ex}(n,H,F)$ for general $H$ was initiated by Alon and Shikhelman \cite{AS16}; see also the recent survey of Gerbner and Palmer \cite{GP26}.

We focus on the case $H=K_3$ and on forbidden graphs consisting of several vertex-disjoint copies of a graph. For an integer $r\ge 1$, we write $rF$ for the vertex-disjoint union of $r$ copies of $F$. Thus $\operatorname{ex}(n,K_3,(s+1)F)$ is the maximum number of triangles in an $n$-vertex graph containing no $s+1$ vertex-disjoint copies of $F$. Problems of this type have been studied for several classes of forbidden graphs. Gerbner, Methuku, and Vizer \cite{GMV19} investigated the function $\operatorname{ex}(n,H,kF)$ in the cases where $F$ is a complete graph, a cycle, or a complete bipartite graph. Recently, Hou, Yang, and Zeng \cite{HYZ24} determined the exact value of $\operatorname{ex}(n,K_3,(\ell+1)C_{2k+1})$ for fixed $\ell,k$ and sufficiently large $n$. Their extremal graph is $K_\ell\nabla T_2(n-\ell)$, where $T_2(m)$ denotes the complete balanced bipartite graph on $m$ vertices. Another closely related family of problems concerns suspensions of paths. For a graph $G$, the suspension $\widehat G$ is obtained from $G$ by adding one new vertex adjacent to every vertex of $G$. Mubayi and Mukherjee \cite{MM23} studied triangles in graphs without bipartite suspensions and proved asymptotic results for $\widehat P_k$ when $k=4,5,6$. Mukherjee \cite{M24} later determined the exact value of $\operatorname{ex}(n,K_3,\widehat P_4)$, and Hei, Hou, and Ma \cite{HHM26} determined the exact value of $\operatorname{ex}(n,K_3,\widehat P_5)$ for sufficiently large $n$. In a related direction, suspensions of trees have also been studied from the ordinary Tur\'{a}n perspective; in particular, Zhu, Wang, Zhang, and Zhang \cite{ZWZZ26} obtained edge-extremal results for suspensions of balanced trees.

The proofs of these results share a common local mechanism. If many triangles pass through a vertex $v$, then the induced neighborhood graph $G[N_G(v)]$ has many edges. If \(\operatorname{ex}(n,R)=o(n^2)\), then $G[N_G(v)]$ must contain a copy of $R$; when $F\subseteq K_1\nabla R$, this produces a copy of $F$ using $v$. This phenomenon makes it possible to identify a bounded set of ``core'' vertices which must meet every copy of $F$ in an extremal graph.

Our goal is to formulate this mechanism as a general lifting theorem. We prove both an asymptotic version and an exact version. The asymptotic theorem applies, for instance, to $\widehat P_6$, where the exact single-forbidden-graph problem is not yet fully settled. The exact theorem applies when a suitable weighted single-forbidden-graph problem is known exactly.

\section{Preliminaries and main results}

\subsection{Preliminaries}

We first introduce some notation. The vertex set and edge set of a graph $G$ are denoted by $V(G)$ and $E(G)$, and $e(G)=|E(G)|$. For graphs $H$ and $G$, let $N(H,G)$ denote the number of copies of $H$ in $G$. In particular, $t(G)=N(K_3,G)$ denotes the number of triangles in $G$. For a vertex $v\in V(G)$, let $N_G(v)$ be its neighborhood. The number of triangles containing $v$ is denoted by $t_G(v)$, and satisfies $t_G(v)=e(G[N_G(v)])$. The join of two vertex-disjoint graphs $G$ and $H$ is denoted by $G\nabla H$. The suspension \(\widehat H\) of a graph \(H\) is defined by $\widehat H = K_1\nabla H$.

We first state the local condition used throughout the paper.

\begin{defi}
Let $F$ be a fixed graph. We say that $F$ satisfies the neighborhood-forcing condition if there exists a graph $R$ such that $\operatorname{ex}(n,R)=o(n^2)$ and $F$ is a spanning subgraph of $K_1\nabla R$.

\end{defi}

By the Erd\H{o}s--Stone--Simonovits theorem, every non-bipartite graph \(R\) satisfies \(\operatorname{ex}(n,R)=\Omega(n^2)\) for sufficiently large \(n\). Thus, for sufficiently large \(n\), the condition \(\operatorname{ex}(n,R)=o(n^2)\) implies that \(R\) is bipartite. The examples considered in this paper satisfy this condition. Indeed, $C_{2k+1}\subseteq K_1\nabla P_{2k}$, $\widehat P_r=K_1\nabla P_r$, and $\widehat{K}_{1,r}=K_1\nabla K_{1,r}$.

We now introduce several auxiliary results that will be needed for our main theorems. The first one is due to Hou, Yang, and Zeng \cite{HYZ24}.

\begin{lem}(Hou, Yang, Zeng\cite{HYZ24})\label{HYZ}
    For an integer $k \ge 1$, let $G$ be a $C_{2k+1}$-free graph on $n$ vertices. For sufficiently large $n$, we have
\[
e(G) + t(G) \le \left\lfloor \frac{n^2}{4} \right\rfloor,
\]
and the equality holds if and only if $G = T_2(n)$.
\end{lem}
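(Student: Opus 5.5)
The plan is a stability argument combined with progressive vertex deletion. Write $f(G)=e(G)+t(G)$ and $h(n)=\lfloor n^2/4\rfloor$. Note that
\[
h(n)-h(n-1)=\lfloor n/2\rfloor .
\]
Call a vertex $v$ \emph{light} if $d_G(v)+t_G(v)<\lfloor n/2\rfloor$. Here $d_G(v)+t_G(v)$ is exactly what $f$ loses when $v$ is deleted.

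\textbf{Deletion step.} Suppose $G$ is $C_{2k+1}$-free on $n$ vertices with $f(G)\ge h(n)$. While the current graph has a light vertex, delete it. Each deletion strictly increases the surplus $f(G_m)-h(m)$, by at least $1$. If the process ran down to some fixed $m_0$, the surplus would be at least $n-m_0$. This is impossible, since $f(G_{m_0})\le \binom{m_0}{2}+\binom{m_0}{3}$ is bounded. Hence the process stops at a graph $G'$ on $m\ge n-O(1)$ vertices, or at least $m\ge n/2$ say, once $n$ is large. Moreover $G'$ has surplus at least the number of deleted vertices and no light vertex.

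\textbf{Structure of $G'$.} First I need $t(G')=o(m^2)$. In a $C_{2k+1}$-free graph every edge lies in few triangles in a suitable averaged sense. I would either quote the Alon--Shikhelman/Győri--Li bound $\operatorname{ex}(n,K_3,C_{2k+1})=O(n^{1+1/k})$, or argue via the triangle removal lemma. The removal-lemma route uses that a graph with $\Omega(n^2)$ triangles has $\Omega(n^3)$ triangles and hence, by a supersaturation/blow-up argument, contains $C_{2k+1}$.

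Given $t(G')=o(m^2)$, we get $e(G')\ge m^2/4-o(m^2)$. The Erdős–Simonovits stability theorem for $C_{2k+1}$ then gives a max-cut partition $V(G')=A\cup B$ with $|A|,|B|=m/2\pm o(m)$ and $o(m^2)$ edges inside the parts. Since no vertex is light, $d(v)+t(v)\ge \lfloor m/2\rfloor-O(1)$ for every $v$. I must convert this into $d(v)\ge(1/2-\varepsilon)m$. This is where $t(v)$ could in principle substitute for degree. I would handle it by noting that $t(v)=e(G'[N(v)])$ and that $G'[N(v)]$ is $P_{2k}$-free, since otherwise a $C_{2k+1}$ through $v$ appears. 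Hence $t(v)=O(k\,d(v))$. So either $d(v)=\Omega(m)$, or $v$ is light. A further cleaning step then yields $d(v)\ge(1/2-\varepsilon)m$, with most neighbours of $v$ on the opposite side by max-cut.

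\textbf{Finishing.} Suppose there is an edge $xy$ inside $A$. Then $x$ and $y$ each have $(1/2-2\varepsilon)m$ neighbours in $B$. The bipartite graph between $A$ and $B$ has minimum degree close to $m/2$. Greedily, one finds an $x$–$y$ path of length $2k$ alternating between $B$ and $A$. Together with $xy$ this path forms a $C_{2k+1}$, a contradiction. Hence $G'$ is bipartite, $t(G')=0$, and $f(G')=e(G')\le h(m)$. So the surplus is at most $0$, meaning no vertex was deleted and $G=G'$. Then $e(G)=h(n)$ with $G$ bipartite forces $G=T_2(n)$, which gives both the bound and the equality case.

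I expect the main obstacle to be the degree-cleaning step. There I must rule out vertices whose contribution $d(v)+t(v)$ is large mainly because of triangles. The $P_{2k}$-freeness of neighbourhoods, via Erdős–Gallai $\operatorname{ex}(d,P_{2k})\le (k-1)d$, is what I would try first.
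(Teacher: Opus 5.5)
The paper gives no proof of this lemma; it is quoted from \cite{HYZ24}, so your argument has to stand on its own. The architecture is sound. You progressively delete vertices with $d(v)+t(v)<\lfloor m/2\rfloor$, each deletion raising $f-h$ by at least $1$. You then apply the Gy\H{o}ri--Li bound $t(G')=O(m^{1+1/k})$ and Erd\H{o}s--Simonovits stability to the terminal graph $G'$, and show $G'$ is bipartite, so that $f(G')\le h(m)$ forces $m=n$.

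However, the step you flag is a genuine gap, and the tool you propose cannot close it. Erd\H{o}s--Gallai on the $P_{2k}$-free graph $G'[N(v)]$ gives only $t(v)\le (k-1)d(v)$, hence $d(v)\ge \lfloor m/2\rfloor/k$. No argument that looks only at $N(v)$ can do better. If $G'[N(v)]$ is a star, or a disjoint union of copies of $K_{2k-1}$, it is $P_{2k}$-free with $t(v)=d(v)-1$, resp.\ $t(v)=(k-1)d(v)$. So a vertex of degree about $m/4$, resp.\ $m/(2k)$, is not light. For the same reason, your finishing claim that $x,y$ have $(1/2-2\varepsilon)m$ neighbours in $B$ does not follow from max-cut, which only gives $d_B(x)\ge d(x)/2$.

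The missing idea is to feed the global near-bipartite structure back into the neighbourhoods to get $t(v)=o(m)$ for every $v$. Let $X$ be the set of vertices missing more than $\varepsilon m$ neighbours on the opposite side of the max-cut $A\cup B$. Since only $o(m^2)$ cross pairs are missing, $|X|=o(m)$. Two vertices outside $X$ are joined, avoiding any bounded set, by paths through $V\setminus X$ of every length $\ell\in[2,2k]$ of the parity dictated by their sides. Consequently, for $k\ge 2$:
(i) no vertex has neighbours outside $X$ in both $A$ and $B$ (close up with a path of length $2k-1$);
(ii) two adjacent vertices $u,w$ cannot have distinct neighbours $b_u,b_w\notin X\cup\{u,w\}$ in the same part, since $u\,b_u\cdots b_w\,w\,u$ with a middle path of length $2k-2$ is a $C_{2k+1}$.

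Suppose $v$ has at least two neighbours outside $X$. Then no edge $uw$ of $G'[N(v)]$ meets $V\setminus X$: apply (ii) to the pair $v,w$, or to $u,w$ if $w$ is itself a neighbour of $v$ outside $X$. So $t(v)=e(G'[N(v)\cap X])\le (k-1)|X|=o(m)$, and hence $d(v)\ge m/2-o(m)$. If $v$ has at most one such neighbour, then $d(v)+t(v)\le k(|X|+1)<\lfloor m/2\rfloor$, so $v$ would be light. Now (i), together with maximality of the cut, puts all but $o(m)$ neighbours of every vertex on the opposite side. Applying (ii) to any edge inside $A$ or $B$ then yields a $C_{2k+1}$, so $G'$ is bipartite and your endgame goes through.

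Two smaller corrections:
\begin{itemize}
\item The deletion step does not give $m\ge n-O(1)$ or $m\ge n/2$; boundedness of $f(G_m)$ only gives $n-m\le\binom m2+\binom m3$. This is harmless, since $m\to\infty$ is all you use.
\item The removal-lemma alternative is false as stated: $\Omega(n^2)$ triangles do not imply $\Omega(n^3)$ triangles. Rely on $\operatorname{ex}(n,K_3,C_{2k+1})=O(n^{1+1/k})$ instead.
\end{itemize}
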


McLennan \cite{M05} verified the Erd\H{o}s--S\'{o}s conjecture for trees of diameter at most four.

\begin{lem}(McLennan\cite{M05})\label{Mc}
For any tree $T$ with diameter at most four, $\operatorname{ex}(m,T) \le \frac{|T|-2}{2}m.$
\end{lem}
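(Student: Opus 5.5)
The statement to prove is McLennan's theorem: the Erdős–Sós conjecture holds for trees of diameter at most four. That is, every graph with more than $\frac{|T|-2}{2}m$ edges on $m$ vertices contains $T$.

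Let $k=|T|-1$ be the number of edges of $T$. The first step is a standard reduction to a graph $G$ with $e(G)>\frac{k-1}{2}m$ and strong minimum-degree properties. I would repeatedly delete a vertex of degree at most $\frac{k-1}{2}$. Each deletion keeps the average-degree inequality strict, so the process terminates in a nonempty subgraph $G'$. In $G'$ every vertex has degree at least $\lceil k/2\rceil$, and the average degree is still greater than $k-1$, so some vertex $v$ has degree at least $k$.

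The tree $T$ has a center $c$ with all vertices within distance $2$ of $c$. Write the neighbours of $c$ as $u_1,\dots,u_d$, where $u_i$ has $a_i$ children, so that $d+\sum a_i=k$. I would try to embed $c$ at a vertex of large degree and each $u_i$ at a neighbour of that vertex. Each child set must then be embedded greedily into the neighbourhood of the image of $u_i$, avoiding vertices already used. A naive greedy embedding fails, because the children of the $u_i$ may need up to $k$ distinct vertices in total while $\deg(u_i)$ is only about $k/2$.

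The key step is therefore a careful choice of the images. I would pick $c\mapsto v$ with $\deg v\ge k$ and order the $u_i$ by decreasing $a_i$. The children of the $u_i$ with large $a_i$ would be placed using second neighbourhoods, and only $u_i$ with $a_i<k/2$ would be assigned to vertices of degree $\ge\lceil k/2\rceil$. A counting argument would show that the used set at each step is smaller than the available neighbourhood.

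If some $u_i$ has $a_i\ge k/2$, I would swap roles and embed $u_i$ at $v$. The tree re-rooted at $u_i$ has depth at most $3$, but most of its mass then sits in the star at $u_i$.

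The main obstacle is the case where there are two or more "heavy" branches and no vertex of $G'$ has degree much larger than $k$. Here neighbourhoods may overlap heavily, as in dense blocks of size about $k$. In that case I would argue structurally: either $G'$ contains a component whose edge density is high enough to force many vertices of degree $\ge k$, giving room for a second high-degree image, or $G'$ is close to a disjoint union of $K_k$'s. In the latter case a direct embedding works, since $K_k$ does not quite suffice and one extra edge forces the tree. I expect this case analysis on the heavy branches to be the hardest part.
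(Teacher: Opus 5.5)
The paper gives no proof of this lemma; it simply cites McLennan. Your proposal is a plan rather than a proof, and the step that carries the weight cannot work as described. The reduction to $G'$ with $\delta(G')\ge\lceil k/2\rceil$ and a vertex $v$ of degree at least $k$ is correct and standard. Your ``key step'', however, claims that a counting argument embeds $T$ with $c\mapsto v$, placing branches with $a_i<k/2$ at vertices of degree $\ge\lceil k/2\rceil$, using only these degree facts. No such argument exists, as the following example shows.

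\emph{The counterexample.} Take $k=6s$. Let $T$ be a centre joined to three vertices, each carrying $2s-1$ leaves. This tree has diameter $4$ and every $a_i=2s-1<k/2$, so you are not in your swap case. Let $B_s$ be two copies of $K_{3s+1}$ sharing one vertex $h$. Then $\delta(B_s)=3s=\lceil k/2\rceil$, $\deg h=k$ and $|V(B_s)|=k+1$, so a copy of $T$ would be spanning. Since $h$ is a cut vertex, the vertex of $T$ sent to $h$ would have to split $T$ into components that can be grouped into two sets of exactly $3s$ vertices. This fails for every choice:
deleting the centre leaves three components of size $2s$; deleting a $u_i$ leaves a component of size $4s+1$; deleting a leaf leaves a component of size $6s$. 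Hence $T\not\subseteq B_s$.

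Now let $G'$ be $B_s$ together with a large disjoint $k$-regular graph. This graph survives your reduction, has average degree greater than $k-1$, and $v=h$ is a legitimate choice of a vertex of degree $\ge k$. Yet no embedding with the centre at $h$ exists. So the global edge bound must be used to decide where the centre and the $u_i$ go, not merely to produce one vertex of degree $k$. That mechanism is the real content of McLennan's theorem, and your proposal does not supply it.

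The remaining pieces are also only asserted. In the swap case $a_i\ge k/2$ (in fact a known special case, due to Sidorenko), you give no rule for placing the other branches. Your closing dichotomy is unproved, and neither branch closes the argument.

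\emph{Second branch.} ``Close to a disjoint union of $K_k$'s'' is a stability-type statement and cannot by itself give the exact bound $\frac{k-1}{2}m$.

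\emph{First branch.} Having many vertices of huge degree does not give ``room''. Take $k=2d+1$ and let $T$ be a centre with one pendant leaf and $d$ pendant paths of length $2$; it has diameter $4$ and a perfect matching. The graph $K_d\nabla\overline{K}_{m-d}$ has $d$ vertices of degree $m-1$ and $dm-\binom{d+1}{2}$ edges, only slightly below $\frac{k-1}{2}m=dm$. Yet it contains no copy of $T$: the vertices of $T$ landing in $K_d$ would form a vertex cover of size at most $d$, while the $d+1$ disjoint matching edges force size at least $d+1$. Only adding a sparse set of extra edges, such as a matching on the independent side (which also lifts the minimum degree to $\lceil k/2\rceil$), creates a copy.

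So near the threshold the embedding hinges on the few surplus edges. This needs a genuinely exact argument, which you defer as ``the hardest part''. As it stands the proposal does not establish the lemma; in the context of this paper it is correctly treated as a cited result.
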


Zhu, Wang, Zhang, and Zhang \cite{ZWZZ26} obtained a sharp bound for \(\operatorname{ex}(n,\widehat T)\) for sufficiently large \(n\), based on the Erd\H{o}s--S\'{o}s conjecture. In particular, their result implies the following result for the suspensions of \(P_4\) and \(P_5\).

\begin{lem}(Zhu, Wang, Zhang, Zhang\cite{ZWZZ26})\label{ZWZZ}
Let $k \in \{ 4,5\}$. We have $\operatorname{ex}(n,\widehat{P}_{k}) = \max\left\{ n_0 n_1 + \left\lfloor \frac{n_0}{2} \right\rfloor : n_0 + n_1 = n \right\}$ for sufficiently large \(n\).
\end{lem}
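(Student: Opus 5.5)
The plan is to prove the lower bound by an explicit construction and the upper bound by a stability argument. The local bound comes from Lemma~\ref{Mc}, applied to the neighborhood of each vertex.

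\emph{Lower bound.} Take $K_{n_0,n_1}$ with parts $A$ ($|A|=n_0$) and $B$ ($|B|=n_1$), and add a maximum matching inside $A$. This gives $n_0n_1+\lfloor n_0/2\rfloor$ edges. We check that no vertex neighborhood contains $P_k$ for $k\in\{4,5\}$:
\begin{itemize}
\item If $v\in B$, then $G[N(v)]$ is contained in the matching on $A$, which has no $P_3$.
\item If $v\in A$, then $N(v)$ is $B$ together with at most one partner $a'$ of $v$. So $G[N(v)]$ is a star centred at $a'$, which has no $P_4$.
\end{itemize}
Hence the graph is $\widehat P_k$-free.

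\emph{Upper bound.} Let $G$ be an extremal $\widehat P_k$-free graph on $n$ vertices. Then $e(G)\ge \lfloor n^2/4\rfloor$.
\begin{enumerate}
\item For every $v$, the graph $G[N(v)]$ is $P_k$-free. Since $P_k$ has diameter at most $4$, Lemma~\ref{Mc} gives $t_G(v)\le \frac{k-2}{2}d(v)$. Summing over $v$ yields $t(G)\le \frac{k-2}{3}e(G)=O(n^2)=o(n^3)$.
\item By the triangle removal lemma, $G$ can be made triangle-free by deleting $o(n^2)$ edges. So $e(G)\le n^2/4+o(n^2)$, and by Füredi/Erdős–Simonovits stability $G$ is $o(n^2)$-close to a balanced complete bipartite graph with parts $A,B$.
\item Cleaning step:
 \begin{itemize}
 \item Choose the partition $A\cup B$ to maximise the number of crossing edges.
 \item Call a vertex \emph{bad} if it has $\Omega(n)$ neighbors inside its own part or $\Omega(n)$ non-neighbors in the other part.
 \item Show there are no bad vertices. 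If a bad vertex $v$ exists, delete it and compare with the bound for $n-1$, which is the standard progressive induction. Deleting $v$ loses about $n/2$ edges while $\max(n_0n_1+\lfloor n_0/2\rfloor)$ drops by about $n/2$. The extra $\Omega(n)$ deficiency of a bad vertex then contradicts extremality.
 \end{itemize}
\item Once every vertex is good, any two vertices in the same part have $(1/2-o(1))n$ common neighbors across.
 \begin{itemize}
 \item Suppose $v\in A$ had two neighbors $u,w\in A$. Pick distinct common neighbors $b_1,b_2\in B$ of $u,v,w$. Then $u\,b_1\,w\,b_2$ is a $P_4$ in $N(v)$, and it extends to $P_5$ using a further vertex of $A$ adjacent to $b_2$ and $v$. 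This contradicts $\widehat P_k$-freeness.
 \item So $G[A]$ and $G[B]$ are matchings.
 \item If both contain an edge, say $a_1a_2$ and $b_1b_2$, then $N(a_1)\supseteq\{a_2,b_1,b_2\}$ plus further common neighbors, which again gives a $P_k$ in $N(a_1)$.
 \item Hence only one side, say $A$, carries a matching. This gives $e(G)\le |A||B|+\lfloor |A|/2\rfloor$, and maximising over $|A|$ gives the formula.
 \end{itemize}
\end{enumerate}

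The main obstacle is step 3: making the deletion argument rigorous with the right error terms. The cleanest route is probably to prove the weaker bound $e(G)\le \max(n_0n_1+\lfloor n_0/2\rfloor)+O(1)$ first and then bootstrap. An alternative is a minimum-degree reduction: repeatedly delete vertices of degree below $(1/2-\varepsilon)n$, and show only $O(1)$ such deletions can occur because each deletion gains $\Omega(n)$ relative to the target function.
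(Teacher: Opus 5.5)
The paper gives no proof of this lemma; it is quoted from Zhu, Wang, Zhang and Zhang, so your sketch has to stand on its own. Your lower-bound construction and items 1--2 are correct, and the overall stability route is reasonable. Two steps, however, fail as written.

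\textbf{The last part of item 4 is false as stated.} A good vertex may still miss up to $\varepsilon n$ vertices on the other side. So from $a_1a_2\in E(G[A])$ and $b_1b_2\in E(G[B])$ you cannot conclude $a_1b_1,a_1b_2,a_2b_1\in E(G)$. For $k=5$ the local claim fails even when all cross edges are present: $K_{A,B}$ plus one edge inside $A$ and one edge inside $B$ is $\widehat P_5$-free, since every neighbourhood is a star plus at most one edge, or a single edge. So ``only one side carries a matching'' cannot be derived locally. You must instead trade matching edges against the number $m$ of missing cross edges. Let $p=|M_A|$ and $q=|M_B|$.
\begin{itemize}
\item For $k=4$, every pair $(e,f)\in M_A\times M_B$ must have a missing edge in $e\times f$. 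Otherwise $b_3a_2b_1b_2\subseteq G[N(a_1)]$ for a common neighbour $b_3$ of $a_1,a_2$. The sets $e\times f$ are pairwise disjoint, so $m\ge pq$.
\item For $k=5$, each $e\in M_A$ has at most one $f\in M_B$ with $e\times f$ complete. Otherwise $b_1b_2a_2b_3b_4\subseteq G[N(a_1)]$. With the symmetric statement this gives $m\ge\max\{p(q-1),q(p-1)\}$.
\end{itemize}
In both cases $e(G)=|A||B|-m+p+q\le |A||B|+\max\{p,q,2\}$, which is at most the target for large $n$. A minor point: in the first bullet of item 4, the $P_5$ should be $b_1ub_2wb_3$, since $v$ has no third neighbour in $A$ to extend with.

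\textbf{Item 3 does not dispose of vertices with many neighbours in their own part.} Deleting a vertex gains only if its degree is below $(1/2-\delta)n$. A vertex $v\in A$ with $d_A(v)\ge\varepsilon n$ can have degree about $(1/2+\varepsilon)n$, so there is no ``extra deficiency''. Such vertices must be excluded directly:
\begin{itemize}
\item By maximality of the cut, $d_B(v)\ge d_A(v)\ge\varepsilon n$.
\item Since only $o(n^2)$ cross edges are missing, $N_A(v)$ and $N_B(v)$ span at least $\varepsilon^2n^2-o(n^2)$ edges.
\item This contradicts $e(G[N(v)])\le\frac{k-2}{2}d(v)$ from item 1.
\end{itemize}
Also, ``compare with the bound for $n-1$'' is circular, because the lemma is only claimed for large $n$. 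The deletion must be organised as follows. Write $f(n)$ for the right-hand side, and delete vertices of degree below $(1/2-\delta)j$ one at a time. Each deletion raises $e(G_j)-f(j)$ by at least $\delta j-O(1)$, so the process stops at some $\ell\ge c(\delta)n$. Then prove $e\le f(\ell)$ for $\widehat P_k$-free graphs on $\ell$ vertices with minimum degree at least $(1/2-\delta)\ell$; this is what the repaired items 2--4 give. It follows that no deletion occurred.
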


\begin{lem}[Mukherjee \cite{M24}; Hei, Hou, Ma \cite{HHM26}]\label{MHHM}
Let \(H_n\) be the following \(n\)-vertex graph:
\[
H_n=
\begin{cases}
\text{\(K_{2q,2q}\) with a perfect matching added in one part},
    & n=4q,\\[1mm]
\text{\(K_{2q,2q+1}\) with a perfect matching added in the smaller part},
    & n=4q+1,\\[1mm]
\text{\(K_{2q,2q+2}\) with a perfect matching added in the larger part},
    & n=4q+2,\\[1mm]
\text{\(K_{2q+1,2q+2}\) with a maximum matching added in the larger part},
    & n=4q+3.
\end{cases}
\]
Then, for \(k\in\{4,5\}\) and sufficiently large \(n\), $\operatorname{ex}(n,K_3,\widehat P_k)=\lfloor \frac{n^2}{8}\rfloor$.
Moreover, among all \(n\)-vertex \(\widehat P_k\)-free graphs \(G\) satisfying $t(G)=\operatorname{ex}(n,K_3,\widehat P_k)$, the graph \(H_n\) is the unique one with the maximum number of edges.
\end{lem}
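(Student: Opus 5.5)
We split the statement into the construction (lower bound), the upper bound on \(t(G)\), and the uniqueness of \(H_n\) among the triangle-extremal graphs with the most edges. The upper bound is the main obstacle; the other two parts are direct checks.

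\emph{Lower bound.} We check directly that \(H_n\) is \(\widehat P_k\)-free, i.e.\ that every neighborhood in \(H_n\) is \(P_k\)-free. Let \(A\) be the part carrying the matching and \(B\) the other part.
\begin{enumerate}
\item A vertex of \(B\) sees exactly \(A\), which induces a matching. This contains no \(P_3\), so in particular no \(P_4\).
\item A vertex of \(A\) sees \(B\) together with its matching partner. Since \(B\) is independent, this neighborhood induces a star and contains no \(P_4\).
\end{enumerate}
Every triangle of \(H_n\) consists of one matching edge and one vertex of \(B\). Hence \(t(H_n)=\lfloor |A|/2\rfloor\,|B|\), and checking the four residues of \(n\) modulo \(4\) gives \(\lfloor n^2/8\rfloor\). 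For instance, \(n=4q\) gives \(q\cdot 2q=2q^2=n^2/8\).

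\emph{Upper bound.} The local input is that \(G\) is \(\widehat P_k\)-free, so \(G[N(v)]\) is \(P_k\)-free and \(t_G(v)=e(G[N(v)])\le \operatorname{ex}(d(v),P_k)\le \frac{k-2}{2}d(v)\). This alone only yields \(t(G)\le \frac{k-2}{3}e(G)\), which is too weak. We therefore pass to edge multiplicities. If \(uv\in E(G)\) has common neighborhood \(W\), then inside \(G[N(u)]\) the vertex \(v\) is joined to all of \(W\). Consequently \(G[W]\) contains no \(P_{k-2}\) and, when \(|W|\) is large, essentially no edges at all.

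From this we plan a stability argument in four steps.
\begin{enumerate}
\item Apply the triangle removal lemma to pass to the subgraph \(G'\) formed by the edges lying in \(\Omega(n)\) triangles. This loses only \(o(n^2)\) triangles.
\item Show that the edges of \(G'\) split into ``rich'' edges, each lying in \(\Theta(n)\) triangles, whose common neighborhoods are nearly independent.
\item Deduce that the rich edges form a near-matching \(M\), and that the triangles are essentially the pairs \((e,w)\) with \(e\in M\) and \(w\) a common neighbor of \(e\).
\item Optimize the count \(\sum_{e\in M}|N(e)|\). Since the common neighborhoods of distinct edges of \(M\) must be disjoint from \(V(M)\) up to lower-order error, we get \(t(G)\le |M|\,(n-2|M|)+o(n^2)\le n^2/8+o(n^2)\), and near-equality forces \(G\) to be close to \(H_n\).
\end{enumerate}
Once \(G\) is close to \(H_n\), a standard cleaning step gives the exact bound. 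We delete low-degree vertices and use that any vertex violating the structure loses at least \(cn\) triangles relative to moving it to its correct class. This yields \(t(G)\le\lfloor n^2/8\rfloor\) for large \(n\). We expect step 2, controlling the configurations in which \(P_{k-2}\)-free common neighborhoods still carry edges (relevant for \(k=5\)), to be the hardest part.

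\emph{Uniqueness.} For the edge statement, let \(G\) attain \(t(G)=\lfloor n^2/8\rfloor\). The exact stability step then forces \(G\) to consist of a bipartition \((A,B)\), a maximum matching inside \(A\), all \(A\)–\(B\) edges present, and nothing else that would create a \(P_k\) in a neighborhood. Any additional edge inside \(B\), or a second edge at a vertex of \(A\), creates a \(P_4\subseteq P_k\) in some neighborhood. Among the admissible part sizes, comparing \(|A||B|+\lfloor |A|/2\rfloor\) subject to \(\lfloor |A|/2\rfloor|B|=\lfloor n^2/8\rfloor\) selects exactly the sizes listed for \(H_n\) in each residue class. This comparison can be cross-checked against the edge extremal value in Lemma~\ref{ZWZZ}.
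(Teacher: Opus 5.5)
The paper gives no proof of this lemma. It is imported from Mukherjee ($k=4$) and Hei--Hou--Ma ($k=5$), so your proposal has to stand on its own, and it does not.

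Your lower-bound check is fine: every neighborhood in $H_n$ induces a matching or a star, and $t(H_n)=\lfloor |A|/2\rfloor|B|=\lfloor n^2/8\rfloor$. The upper bound, however, is only a programme. Steps 2--4 and the ``standard cleaning'' are asserted rather than proved, and they are precisely the content of the cited papers.

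Step 1 is also false as written, already for $H_n$. An $A$--$B$ edge $ab$ has common neighborhood $\{a'\}$. So the only edges lying in $\Omega(n)$ triangles are the matching edges, which span no triangle, and passing to $G'$ destroys all $\lfloor n^2/8\rfloor$ triangles. Under the charitable reading (keep the triangles that contain a rich edge), the removal lemma still does not give the claim. It yields $o(n^2)$ edges meeting every triangle, but an edge of codegree below $\varepsilon n$ can carry up to $\varepsilon n$ triangles. The loss is therefore only bounded by $o(n^2)\cdot\varepsilon n$.

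For $k=4$ the usable local picture is different from a near-matching of rich edges. A connected $P_4$-free graph is a star or a triangle. Hence a triangle of $G$ not inside a $K_4$ has at most one edge of codegree $\ge 2$. Double counting $\sum_v d(v)$ over the components of the graphs $G[N(v)]$ then gives $t(G)\le e(G)/2+N(K_4,G)$. The removal lemma is needed only to show that the (edge-disjoint) copies of $K_4$ number $o(n^2)$. This yields $n^2/8+o(n^2)$, but neither the exact value nor the case $k=5$. There the $P_5$-free neighborhoods have many more component types (double stars, stars with one added edge, $C_4$, $K_4$, \dots).

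The uniqueness part rests on ``the exact stability step then forces $G$ to consist of \dots'', that is, on a full characterization of all triangle-extremal graphs, which is never established. Stability controls $G$ only up to $o(n^2)$ edges, while the comparison you need is at the level of single edges. For $n=4q+2$, $K_{2q,2q+2}$ with a perfect matching in the smaller part has the same $2q^2+2q$ triangles as $H_n$ and exactly one edge fewer.

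Your justification ``creates a $P_4\subseteq P_k$'' is also inverted for $k=5$. A $P_4$ in a neighborhood is allowed in a $\widehat P_5$-free graph, so you must exhibit a $P_5$. This can be done: after adding $b_1b_2$ inside $B$, $N(b_1)$ contains the path $a_1a_1'b_2a_2a_2'$; after adding a second edge $aa''$ at $a\in A$, $N(a)$ contains $b_1a'b_2a''b_3$. But this only rules out local perturbations of $H_n$, not other extremal configurations.
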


For \(\widehat P_6\), we use the following asymptotic version of Construction $3$ in Hei, Hou and Ma \cite{HHM26}, corresponding to the case \(k=6\). Let \(A\) and \(B\) be two disjoint vertex sets with $|A|=\lceil \frac n2\rceil$, $|B|=\lfloor \frac n2\rfloor$. Start with the complete bipartite graph \(K_{A,B}\). Inside \(A\), add a collection of $\lfloor \frac{|A|}{3}\rfloor$ pairwise vertex-disjoint triangles, leaving at most two vertices of \(A\) uncovered. Denote the resulting graph by \(H'_n\). Then
\[
    e(H'_n)
    =
    |A||B|+3\lfloor \frac{|A|}{3}\rfloor
    =
    \frac{n^2}{4}+O(n),
\]
and
\[
    t(H'_n)
    =
    3\lfloor \frac{|A|}{3}\rfloor |B|
    +
    \lfloor \frac{|A|}{3}\rfloor
    =
    \frac{n^2}{4}+O(n).
\]
It is straightforward to see that \(H'_n\) is \(\widehat P_6\)-free.

\begin{lem}(Mubayi, Mukherjee \cite{MM23})\label{MM}
$\operatorname{ex}(n,K_3,\widehat{P}_6) = \lfloor \frac{n^2}{4} \rfloor + o(n^2)$.
\end{lem}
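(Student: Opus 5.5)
The lower bound comes from the construction $H'_n$ described just before the statement. Since $H'_n$ is $\widehat P_6$-free and $t(H'_n)=\frac{n^2}{4}+O(n)$, we get $\operatorname{ex}(n,K_3,\widehat P_6)\ge \lfloor n^2/4\rfloor - O(n)$, which is absorbed in the $o(n^2)$ term. All the work is in the upper bound $t(G)\le \frac{n^2}{4}+o(n^2)$ for every $n$-vertex $\widehat P_6$-free graph $G$.

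\textbf{Step 1 (local bound).} Since $\widehat P_6=K_1\nabla P_6$, each neighborhood $G[N_G(v)]$ is $P_6$-free. The Erd\H{o}s--Gallai theorem then gives $t_G(v)=e(G[N_G(v)])\le 2d_G(v)$. Summing over $v$ yields $t(G)\le \frac{4}{3}e(G)$, and in particular $t(G)=O(n^2)=o(n^3)$.

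\textbf{Step 2 (global structure).} By the triangle removal lemma, we can delete a set $D$ of $o(n^2)$ edges so that $G'=G-D$ is triangle-free. By Mantel's theorem, $e(G')\le n^2/4$. Moreover, since $\chi(\widehat P_6)=3$, the Erd\H{o}s--Stone--Simonovits stability theorem lets us assume that $G'$ is close to a complete bipartite graph with parts $A,B$, each of size $\frac n2+o(n)$. Otherwise $e(G)\le (\frac14-\varepsilon)n^2$, and Step 1 alone is too weak, so this case must be handled separately by the same counting as below. Every triangle of $G$ contains an edge of $D$.

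\textbf{Step 3 (the target inequality).} It suffices to prove
\[
t(G)\le e(G')+o(n^2),
\]
which is exactly the balance attained by $H'_n$, where $t\approx e\approx n^2/4$. Write $3t(G)=\sum_v e(G[N(v)])$. The edges of $G[N(v)]$ lying in $G'$ can only be ``cross'' edges between $N(v)\cap A$ and $N(v)\cap B$. The edges lying in $D$ are ``inside'' edges, and there are only $o(n^2)$ of them in total. For a typical $v\in A$, the set $N(v)$ is essentially $B$ together with a few vertices of $A$. The $P_6$-freeness of $G[N(v)]$ then forces the following structure. The inside edges within $N(v)\cap B$ form a graph in which each component has at most about $3$ vertices' worth of edges per vertex; this is the triangle-matching picture of $H'_n$. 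The cross edges from the few vertices of $N(v)\cap A$ into $N(v)\cap B$ are limited by the same $P_6$ constraint. Double counting then gives, for each inside edge $xy\in D$ with $x,y\in B$, a contribution of about $\operatorname{codeg}(x,y)\le |A|+o(n)$. It also shows that inside edges in $B$ form components of bounded size, so that $\sum_{xy\in D} \operatorname{codeg}(xy)\le \frac{1}{2}\cdot\frac{n}{2}\cdot\frac{n}{2}\cdot(1+o(1))$ after accounting for both sides.

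\textbf{Main obstacle.} The delicate step is Step 3: converting the pointwise $P_6$-free constraint into a sharp global inequality. The constant $2$ in Erd\H{o}s--Gallai is too weak, so one must exploit the interaction between vertices on opposite sides. My first attempt would be an exchange argument on the inside graph $G[A]$ (and $G[B]$). First show that every component of $G[A]$ of size at least $4$ containing a $P_4$ yields a $\widehat P_6$ with an apex in $B$, using the large common neighborhood. Then show that the maximum of $|B|\cdot e(G[A])+|A|\cdot e(G[B])$ under the constraint that these inside graphs are $P_4$-free is $\frac{n^2}{4}+o(n^2)$. The extremum is attained by the disjoint-triangle construction.
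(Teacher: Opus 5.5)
The paper gives no proof of this lemma; it quotes it from Mubayi--Mukherjee. Your proposal therefore has to stand on its own, and it has two genuine gaps.

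\textbf{The case split in Step 2.} Step 1 gives only $t(G)\le\frac43 e(G)$, which is at most $\frac{n^2}{4}$ only when $e(G)\le\frac{3}{16}n^2$. Erd\H{o}s--Simonovits stability, on the other hand, produces the bipartition $A,B$ only when $e(G)\ge(\frac14-\varepsilon)n^2$. In the whole range $\frac{3}{16}n^2<e(G)<(\frac14-\varepsilon)n^2$ the triangle-free skeleton $G'$ can be far from $T_2(n)$; a balanced blow-up of $C_5$, for instance, has $n^2/5$ edges. Your Step 3 counting begins with ``for a typical $v\in A$, $N(v)$ is essentially $B$'', so it says nothing in this range, and ``handled separately by the same counting'' is not an argument. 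You would need either a structural conclusion derived from $t(G)\ge(\frac14-\varepsilon)n^2$ itself rather than from $e(G)$, or a proof of your target inequality $t(G)\le e(G)+o(n^2)$ that does not presuppose a global bipartition. As it stands, the upper bound is not established outside the near-bipartite regime.

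\textbf{The near-bipartite regime.} Even here, the concrete plan in your last paragraph fails. The claim that a component of $G[A]$ with at least $4$ vertices containing a $P_4$ yields a $\widehat P_6$ with apex in $B$ is false. Take $K_{A,B}$ with $B$ independent and vertex-disjoint copies of $C_4$ (or $C_5$) inside $A$. This graph is $\widehat P_6$-free, since $G[N(b)]$ has components on at most $5$ vertices and $G[N(a)]\cong K_{2,|B|}$. More seriously, maximizing $|B|e(G[A])+|A|e(G[B])$ with each side merely $P_4$-free gives about $2|A||B|\approx n^2/2$ (disjoint triangles on both sides), not $n^2/4$. The bound you display for $\sum_{xy\in D}\operatorname{codeg}(xy)$ is asserted, not derived.

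The missing ingredient is the coupling between the two sides through an apex $a\in A$. Its neighbourhood contains almost all of $B$ together with its typical $A$-neighbours, each adjacent to almost all of $B$. If $d_{G[A]}(a)\ge 3$, this gives the path $b_1xb_2yb_3z$. If $d_{G[A]}(a)=2$ and $G[B]$ has a single edge $b_1b_2$ avoiding the $o(n)$ exceptional vertices, it gives $b_1b_2xb_3yb_4$. Combine this with the apex-in-$B$ observation that $G[A]$, on typical vertices, is $P_6$-free. Then either both sides are essentially matchings, or one side has $o(n)$ edges and the other has maximum degree $2$ with components on at most $5$ vertices. In both cases $|B|e(G[A])+|A|e(G[B])\le |A||B|+o(n^2)\le\frac{n^2}{4}+o(n^2)$. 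Atypical vertices cost only $o(n^2)$, because $t_G(v)\le 2d_G(v)$ by your Step 1.

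Note also that $K_{n/2,n/2}$ with a perfect matching added in each part is asymptotically extremal. So the one-sided $H'_n$ picture is not the only configuration your argument must cover.
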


\begin{lem}(Alon, Shikhelman \cite{AS16})\label{AS}
$\operatorname{ex}(n,K_3,\widehat{K}_{1,r})=o(n^2)$
\end{lem}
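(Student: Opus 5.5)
The plan is to derive Lemma~\ref{AS} from the triangle removal lemma via a short structural argument. It is a cited result of Alon and Shikhelman, so the point is only to reconstruct a standard proof. Let $G$ be an $n$-vertex $\widehat K_{1,r}$-free graph.

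The first step is a local observation. For every vertex $v$, the neighborhood graph $G[N_G(v)]$ contains no copy of $K_{1,r}$. Otherwise $v$ together with that star would form a copy of $\widehat K_{1,r}=K_1\nabla K_{1,r}$. Hence every vertex of $G[N_G(v)]$ has degree at most $r-1$ there. Equivalently, for each edge $uv$ of $G$, the number of common neighbours of $u$ and $v$ is at most $r-1$: if $w\in N(u)\cap N(v)$ then $w\in N(v)$ and $u\in N_G(v)$ is adjacent to $w$, so the common neighbours of $u,v$ are exactly the neighbours of $u$ inside $G[N_G(v)]$. Thus every edge of $G$ lies in at most $r-1$ triangles.

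The second step turns this into a packing statement. Since each edge lies in at most $r-1$ triangles, a greedy argument applies. Pick a triangle, delete it together with every triangle sharing an edge with it; this removes at most $3(r-1)$ triangles. Repeating gives a family of at least $t(G)/(3r)$ pairwise edge-disjoint triangles. On the other hand, destroying all triangles of $G$ requires deleting at least one edge from each triangle of this edge-disjoint family. Conversely, deleting all edges of all triangles costs at most $3t(G)$ edges, which is not needed. So $G$ contains $\Omega(t(G))$ edge-disjoint triangles, while $t(G)\le (r-1)e(G)/3 =O(n^2)$.

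The final step applies the triangle removal lemma. Suppose for contradiction $t(G)\ge \varepsilon n^2$. Then by the packing above, one must delete at least $\varepsilon n^2/(3r)$ edges to make $G$ triangle-free. The removal lemma (Ruzsa--Szemerédi) then yields $t(G)\ge \delta n^3$ for some $\delta=\delta(\varepsilon,r)>0$. But $t(G)\le (r-1)e(G)/3\le rn^2$, which is a contradiction for large $n$. Hence $t(G)=o(n^2)$.

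The main obstacle is only the invocation of the removal lemma; everything else is elementary counting. This is exactly the $(6,3)$-type argument, and it explains why the bound is $o(n^2)$ rather than $O(n^{2-c})$.
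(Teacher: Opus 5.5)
Your proof is correct: since $\widehat K_{1,r}=K_1\nabla K_{1,r}$ is the book with $r$ pages, every edge lies in at most $r-1$ triangles, and the triangle removal lemma then forces $t(G)=o(n^2)$. The paper gives no proof of this lemma and only cites Alon--Shikhelman, and your argument is the standard removal-lemma proof of that result. The greedy edge-disjoint packing is a harmless detour: deleting any single edge destroys at most $r-1$ triangles, so at least $t(G)/(r-1)$ deletions are needed directly.
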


\subsection{Main results}

We now state the asymptotic lifting theorem.

\begin{thm}\label{thm::main1}
    Let $F$ be a graph satisfying the neighborhood-forcing condition. Suppose that there exist constants $\alpha\ge 0$, $\beta>0$ such that $\operatorname{ex}(n,K_3,F)=\alpha n^2+o(n^2)$, and $\operatorname{ex}(n,F)=\beta n^2+o(n^2)$. Assume further that, for sufficiently large integer $m$, there exists an $m$-vertex $F$-free graph $Q_m$ satisfying \[ t(Q_m)=\alpha m^2+o(m^2), \qquad e(Q_m)=\beta m^2+o(m^2). \] Then, for every integer $s\ge 0$, \[ \operatorname{ex}\bigl(n,K_3,(s+1)F\bigr) = (\alpha+s\beta)n^2+o(n^2). \]
\end{thm}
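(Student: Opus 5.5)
The plan is to prove matching lower and upper bounds.

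\emph{Lower bound.} Take $G=K_s\nabla Q_{n-s}$. Any copy of $F$ in $G$ must meet the $s$ vertices of $K_s$, because $Q_{n-s}$ is $F$-free. Hence $G$ has no $s+1$ vertex-disjoint copies of $F$. Its triangles split into three kinds:
\begin{itemize}
\item triangles inside $Q_{n-s}$, contributing $t(Q_{n-s})=\alpha n^2+o(n^2)$;
\item triangles with one vertex in $K_s$ and an edge of $Q_{n-s}$, contributing $s\,e(Q_{n-s})=s\beta n^2+o(n^2)$;
\item triangles with at least two vertices in $K_s$, contributing $O(n)$.
\end{itemize}
This gives $(\alpha+s\beta)n^2+o(n^2)$. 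This step is exactly where the simultaneous construction $Q_m$ is needed.

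\emph{Upper bound.} I will induct on $s$; the case $s=0$ is the hypothesis. Let $G$ be $(s+1)F$-free on $n$ vertices, and fix a small $\varepsilon>0$. Call a vertex $v$ \emph{heavy} if $t_G(v)=e(G[N_G(v)])\ge \varepsilon n^2$. Since $\operatorname{ex}(n,R)=o(n^2)$, for large $n$ the graph $G[N_G(v)]$ of a heavy vertex contains many copies of $R$. More is true: by supersaturation, or simply by deleting any bounded set of vertices and reapplying $\operatorname{ex}(n,R)=o(n^2)$, the neighborhood of $v$ still contains a copy of $R$ avoiding any prescribed set of $O(1)$ vertices. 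Together with $v$, this copy spans a copy of $F$.

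Let $D$ be the set of heavy vertices. I first claim $|D|\le s$. If there were $s+1$ heavy vertices, I would process them greedily. At each step, pick a copy of $R$ in the neighborhood of the current heavy vertex that avoids all previously used vertices and the other $s$ chosen heavy vertices. This yields $s+1$ disjoint copies of $F$, a contradiction.

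Next, let $G'=G-D$. I then want $G'$ to be essentially $F$-free, up to $o(n^2)$ triangles and edges. If $|D|=d<s$, then $G'$ would contain $s+1-d$ disjoint copies of $F$. Otherwise I find the copies directly: take a copy of $F$ in $G'$, then for each heavy vertex greedily build a disjoint copy of $F$ through its neighborhood avoiding the $O(1)$ used vertices. This gives $s+1$ disjoint copies whenever $G'$ contains $s+1-d$ disjoint copies of $F$. Thus $G'$ is $(s+1-d)F$-free.

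The main obstacle is handling $d<s$: then $G'$ is only $(s-d+1)F$-free but has no heavy vertex. To resolve this, I will show that an $(r+1)F$-free graph $G'$ with all $t(v)<\varepsilon n^2$ satisfies $t(G')\le \alpha n^2+O(r\varepsilon)n^2+o(n^2)$. I will take a maximum family of disjoint copies of $F$. It has at most $r$ copies and hence $O(1)$ vertices $W$. Deleting $W$ leaves an $F$-free graph, and $W$ lies in at most $|W|\varepsilon n^2$ triangles.

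Finally I bound the triangles of $G$. Those inside $G'$ number at most $\alpha n^2+O(\varepsilon)n^2+o(n^2)$. Those using exactly one vertex of $D$ number at most $\sum_{v\in D} e(G')$ up to $o(n^2)$ corrections, and those using two or more vertices of $D$ number $O(n)$. For the bound on $e(G')$, the same argument applies: deleting $W$ gives an $F$-free graph, so $e(G')\le \beta n^2+O(n)$. Hence
\[
t(G)\le (\alpha+d\beta+O(\varepsilon))n^2+o(n^2)\le(\alpha+s\beta+O(\varepsilon))n^2+o(n^2).
\]
Letting $\varepsilon\to 0$ completes the upper bound. This route avoids explicit induction on $s$ altogether, since it does not use the $s=0$ case beyond the hypotheses $\operatorname{ex}(n,K_3,F)=\alpha n^2+o(n^2)$ and $\operatorname{ex}(n,F)=\beta n^2+o(n^2)$. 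The step needing most care is the bookkeeping of $o(n^2)$ terms under vertex deletion. That bookkeeping uses $\beta>0$ only so that $\beta$ appears in the answer; the deletion bounds themselves are uniform in $n$.
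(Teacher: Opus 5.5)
Your argument is correct, but it organizes the upper bound differently from the paper. Both proofs use the same two ingredients:
\begin{enumerate}
\item The heavy set (vertices with $t_G(v)\ge\varepsilon n^2$) has at most $s$ vertices, since heavy neighborhoods yield disjoint copies of $R$ greedily.
\item Triangles are split according to how many vertices they have in the heavy set.
\end{enumerate}

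The paper proceeds as follows. It inducts on $s$ to discard $sF$-free graphs. It then fixes $s$ disjoint copies of $F$ with vertex set $U$, so that $W=V(G)\setminus U$ is $F$-free. Finally it treats two cases separately. When $|L|\le s-1$, it charges triangles through the light vertices of $U$, and absorbs the $O(\gamma n^2)$ loss into the missing $\beta n^2$ for one fixed small $\gamma$. When $|L|=s$, $G-L$ is $F$-free.

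You instead delete the heavy set $D$ first and note that $G-D$ is $(s+1-|D|)F$-free. You then strip off a maximal family of disjoint copies of $F$ from $G-D$. Every vertex of $G-D$ is light, so this costs only $O(\varepsilon n^2)$ triangles. This treats all values of $|D|$ uniformly and needs no induction; your opening sentence announcing one is superfluous. Because you let $\varepsilon\to0$, your upper bound also never uses $\beta>0$. The paper's route works with a single fixed $\gamma$, at the price of using $\beta>0$.

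Two small corrections:
\begin{itemize}
\item The edge bound should read $e(G')\le\beta n^2+o(n^2)$ rather than $\beta n^2+O(n)$, since $\operatorname{ex}(n,F)$ is only known up to $o(n^2)$.
\item The threshold on $n$ depends on $\varepsilon$, so the limit $\varepsilon\to0$ should be stated as $\limsup_{n}\operatorname{ex}(n,K_3,(s+1)F)/n^2\le\alpha+s\beta+C\varepsilon$ for every $\varepsilon>0$. Alternatively, when $|D|<s$ you can absorb the $\varepsilon$-term into the missing $\beta n^2$, exactly as the paper does; when $|D|=s$ the family you remove is empty.
\end{itemize}
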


For exact results we need a weighted single-forbidden-graph extremal function. For every integer $q\ge 0$, define $\psi_q(m;F) = \max\big\{t(Q) + qe(Q) \,\big| \, |V(Q)| = m,\ Q \text{ is } F\text{-free}\big\}$. 

\begin{thm}\label{thm::main2}
    Let $F$ be a graph satisfying the neighborhood-forcing condition, and let $s$ be an integer with $s \ge 0$. Suppose that, for every $0\le q\le s$, the exact value of $\psi_q(m;F)$ is known and there exist constants $a_0 < a_1 < \cdots < a_s$ such that $\psi_q(m;F) = a_q m^2 + O(m)$ holds for each $q = 0,1,\dots,s$. Then, for sufficiently large $n$,
$$
\operatorname{ex}\big(n,K_3,(s+1)F\big) = \psi_s(n-s;F) + \binom{s}{2}(n-s) + \binom{s}{3}.
$$
Furthermore, if the unique extremal graph for $\psi_s(n-s;F)$ is $Q_{n-s}$ and $Q_{n-s}$ contains no isolated vertices, then the unique extremal graph for $\operatorname{ex}\big(n,K_3,(s+1)F\big)$ is $K_s \nabla Q_{n-s}$.
\end{thm}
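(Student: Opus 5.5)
The lower bound comes from the construction $K_s\nabla Q$. The upper bound is a stability-type argument that locates $s$ high-degree core vertices.

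\textbf{Lower bound.} Take $Q$ to be an $F$-free graph on $n-s$ vertices attaining $\psi_s(n-s;F)$, and set $G=K_s\nabla Q$. Every copy of $F$ must meet the $s$ join vertices, since $Q$ is $F$-free. Hence any family of vertex-disjoint copies of $F$ has at most $s$ members, and $G$ is $(s+1)F$-free. Counting triangles in $G$ by how many vertices they use from $K_s$ gives
\[
t(G)=t(Q)+s\,e(Q)+\binom{s}{2}(n-s)+\binom{s}{3}.
\]
This is exactly the claimed value.

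\textbf{Upper bound.} Let $G$ be an extremal $n$-vertex $(s+1)F$-free graph, so $t(G)\ge a_s n^2-O(n)$. Proceed by induction on $s$; the case $s=0$ is the definition of $\psi_0$. Fix a small constant $\varepsilon>0$ and call $v$ \emph{heavy} if $t_G(v)\ge \varepsilon n^2$. Then $e(G[N_G(v)])\ge\varepsilon n^2$, so by the neighborhood-forcing condition and $\operatorname{ex}(n,R)=o(n^2)$ the graph $G[N_G(v)]$ contains many copies of $R$. A supersaturation/greedy argument then gives a copy of $K_1\nabla R\supseteq F$ through $v$ that avoids any prescribed set of $O(1)$ other vertices. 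Consequently, if there are at least $s+1$ heavy vertices, we can build $s+1$ disjoint copies of $F$ greedily, one through each heavy vertex, a contradiction. So there are at most $s$ heavy vertices; let $L$ be this set with $|L|=\ell\le s$.

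Next remove $L$. If $G-L$ contains $s+1-\ell$ disjoint copies of $F$, extend them greedily by copies through the vertices of $L$ to reach $s+1$, a contradiction. So $G-L$ is $(s+1-\ell)F$-free. Count
\[
t(G)\le t(G-L)+\ell\, e(G-L)+\binom{\ell}{2}n+\binom{\ell}{3}.
\]

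\textbf{Case $\ell<s$.} Bound $t(G-L)+\ell e(G-L)$ using the induction hypothesis together with the structure of $(s+1-\ell)F$-free graphs. The cleanest route is to show directly that for an $(r+1)F$-free graph $H$,
\[
t(H)+q\,e(H)\le a_{q+r}m^2+o(m^2).
\]
This follows from the same heavy-vertex decomposition, applied with weights. Then $t(G)\le a_{s-1}n^2+o(n^2)<a_s n^2-O(n)$, because $a_{s-1}<a_s$ strictly. This contradiction forces $\ell=s$.

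\textbf{Case $\ell=s$.} Now $G-L$ is $F$-free, so $t(G-L)+s\,e(G-L)\le\psi_s(n-s;F)$, and the remaining terms are at most those in $K_s\nabla(G-L)$. This gives the exact bound.

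\textbf{Uniqueness.} Equality forces $L$ to be a clique, every vertex of $L$ to be joined to all of $G-L$, and $G-L=Q_{n-s}$. The no-isolated-vertices hypothesis is what lets us verify the join edges: each vertex of $Q$ lies in an edge, so a missing edge from $L$ to $Q$ loses a triangle.

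\textbf{Main obstacle.} The delicate step is the case $\ell<s$. Light vertices may still carry $\Theta(n^2)$ triangles collectively, so I need the weighted inequality for $(r+1)F$-free graphs with only $o(n^2)$ error. For this I would apply removal-lemma-type cleaning: delete $o(n^2)$ edges so that every remaining vertex lies in $o(n^2)$ triangles, and argue that the cleaned graph is $F$-free after removing $O(1)$ vertices. Comparing with $\psi_q$ then produces the gap $a_s-a_{s-1}>0$.
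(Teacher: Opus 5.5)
There is a genuine gap in the case $\ell<s$. Your lower bound, the bound $|L|\le s$, the case $\ell=s$, and the uniqueness argument agree with the paper.

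The inequality you propose, $t(H)+q\,e(H)\le a_{q+r}m^2+o(m^2)$ for $(r+1)F$-free $H$, applied to $H=G-L$ with $q=\ell$ and $r=s-\ell$, gives only $t(G)\le a_{s}n^2+o(n^2)$. Your conclusion $t(G)\le a_{s-1}n^2+o(n^2)$ does not follow, so there is no contradiction. No bound that uses only the $(s+1-\ell)F$-freeness of $G-L$ can do better. Take $Q$ to be an $F$-free graph on $m-r$ vertices that is extremal for $\psi_{q+r}$. Then $K_r\nabla Q$ is $(r+1)F$-free and has $t+q\,e=a_{q+r}m^2+O(m)$. The information you discard is that every vertex of $G-L$ is light.

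Using that information needs no removal lemma. A maximum family of vertex-disjoint copies of $F$ in $G-L$ has at most $s-\ell$ members. So its vertex set $U'$ has bounded size, and $G-L-U'$ is $F$-free. Each vertex of $U'$ lies in fewer than $\varepsilon n^2$ triangles, hence
\[
t(G)\le \psi_\ell(n-\ell-|U'|;F)+|U'|\,\varepsilon n^2+O(n)\le a_{s-1}n^2+O(\varepsilon n^2)+O(n).
\]
This contradicts $t(G)\ge a_sn^2-O(n)$ once $\varepsilon$ is small compared with $a_s-a_{s-1}$.

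This is the paper's argument in slightly different form. The paper uses the induction hypothesis to find $s$ disjoint copies of $F$ with vertex set $U$. It notes that at most $s-1$ vertices of $U$ are heavy, charges the light ones $|U|\gamma n^2$ triangles, and bounds the rest by $\psi_{s-1}(|W|;F)+O(n)$.

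Your concern that light vertices can carry $\Theta(n^2)$ triangles collectively does not matter, because only $O(1)$ of them are deleted. An error of $O(\varepsilon n^2)$ rather than $o(n^2)$ is enough. The cleaning step you suggest would also not work as stated. A set of $o(n^2)$ edges can lie in $\Theta(n^2)$ triangles (the matching edges of $H_n$ are an example), so deleting such edges does not control the triangle count at scale $n^2$.
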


    \section{Proof of Theorem \ref{thm::main1}}
\begin{proof}
We first establish the lower bound for $\operatorname{ex}\bigl(n,K_3,(s+1)F\bigr)$. Take an $F$-free graph $Q_{n-s}$ satisfying $t(Q_{n-s}) = \alpha n^2 + o(n^2)$, $e(Q_{n-s}) = \beta n^2 + o(n^2)$. Construct the graph $G_0 = K_s \nabla Q_{n-s}$. Since $Q_{n-s}$ contains no copy of $F$, every copy of $F$ inside $G_0$ must contain at least one vertex from $K_s$. It follows that $G_0$ is $(s+1)F$-free. The number of triangles in $G_0$ satisfies
\[
t(G_0) = t(Q_{n-s}) + s\cdot e(Q_{n-s}) + O_s(n) = (\alpha + s\beta)n^2 + o(n^2). 
\]
Thus, $\operatorname{ex}\bigl(n,K_3,(s+1)F\bigr) \ge (\alpha + s\beta)n^2 + o(n^2)$.

    We proceed by induction on $s$. For the base case $s=0$, the desired identity becomes $\operatorname{ex}(n,K_3,F) = \alpha n^2 + o(n^2)$, and this equality is exactly our standing assumption. Let $s \ge 1$, and suppose the statement holds for $s-1$. Let $G$ be an $n$-vertex $(s+1)F$-free graph with $t(G)=\operatorname{ex}\bigl(n,K_3,(s+1)F\bigr)$. If $G$ is $sF$-free, by the induction hypothesis and $\beta>0$, then 
$$
t(G) \le \big(\alpha + (s-1)\beta\big)n^2 + o(n^2) < (\alpha + s\beta)n^2 + o(n^2), 
$$
which contradicts the lower bound on $\operatorname{ex}\bigl(n,K_3,(s+1)F\bigr)$. We may therefore assume that $G$ contains $s$ vertex-disjoint copies of $F$. Take such a collection of $s$ disjoint copies of $F$, and let $U$ denote the union of their vertex sets. Since $s$ and $F$ are fixed, we have $|U| = O_s(1)$.

Define $W = V(G) \setminus U$. Since $G$ is $(s+1)F$-free, the induced subgraph $G[W]$ is $F$-free. Note that for every vertex $v\in V(G)$, we have $t_G(v) = e\big(G[N(v)]\big)$. Fix a sufficiently small constant $\gamma = \gamma(s,F) > 0$, and define the set of vertices with high triangle-degree by
\[
L = \big\{v \in V(G) : t_G(v) \ge \gamma n^2\big\}. 
\]

\begin{claim}\label{claim::main1}
    $|L| \le s$.
\end{claim}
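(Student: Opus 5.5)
We argue by contradiction: suppose $|L|\ge s+1$, fix $s+1$ distinct vertices $v_1,\dots,v_{s+1}\in L$, and embed $s+1$ vertex-disjoint copies of $F$, the $i$-th having $v_i$ as the apex of $K_1\nabla R$. This contradicts that $G$ is $(s+1)F$-free. The embedding is greedy. Suppose we have already found disjoint copies $F_1,\dots,F_{i-1}$ with $v_j\in V(F_j)$. Let $X$ be the set of vertices used by $F_1,\dots,F_{i-1}$ together with $\{v_{i+1},\dots,v_{s+1}\}$, which we reserve as future apexes. Then $|X|\le (s+1)|V(F)|=O_s(1)$.

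Consider the graph $G_i=G[N(v_i)\setminus X]$. Since $t_G(v_i)=e(G[N(v_i)])\ge \gamma n^2$, and deleting the $|X|$ vertices of $X$ destroys at most $|X|\,n=O(n)$ edges, we get $e(G_i)\ge \gamma n^2-O(n)$. Since $\operatorname{ex}(m,R)=o(m^2)$, for $n$ large we have $\operatorname{ex}(n,R)<\gamma n^2-O(n)$, and $|V(G_i)|\le n$, so $G_i$ contains a copy of $R$. Adding $v_i$, which is adjacent to all of $N(v_i)$, gives a copy of $K_1\nabla R\supseteq F$. By construction this copy avoids $X$, so it is disjoint from $F_1,\dots,F_{i-1}$ and from the remaining reserved apexes. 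Call it $F_i$. After $s+1$ steps we obtain $(s+1)F$ in $G$, a contradiction, so $|L|\le s$.

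The only point needing care is the uniformity of constants. We use that $\operatorname{ex}(n,R)\le \operatorname{ex}(n',R)$ monotonicity is irrelevant: we simply need $\operatorname{ex}(N,R)<\gamma n^2/2$ for every $N\le n$, which holds for large $n$ because $\operatorname{ex}(N,R)\le \binom{N}{2}$ is small when $N$ is small, and is $o(N^2)\le o(n^2)$ when $N$ is large. With the threshold $n\ge n_0(\gamma,s,F)$ fixed, every step goes through. Note that this claim only needs $\gamma>0$ to be fixed; its smallness is used later in the proof. I expect no real obstacle here. The main subtlety is reserving the other apexes in $X$, so that no earlier copy of $F$ swallows a later $v_j$.
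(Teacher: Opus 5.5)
Your proof is correct and follows the paper's argument: you greedily find, for each of $s+1$ vertices of $L$, a copy of $R$ in its neighborhood that avoids the $O_s(1)$ already-used or reserved vertices (at a cost of only $O_s(n)$ edges), and you attach the apex to obtain $s+1$ vertex-disjoint copies of $F$. Your separate uniformity argument is valid but unnecessary: $\operatorname{ex}(N,R)$ is nondecreasing in $N$ (pad with isolated vertices), so $e(G_i)>\operatorname{ex}(n,R)\ge \operatorname{ex}(|V(G_i)|,R)$ already suffices, and monotonicity is in fact exactly what settles the point.
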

\begin{proof}
    Suppose for contradiction that $|L| \ge s+1$. Pick distinct vertices $x_1,\cdots,x_{s+1} \in L$. For each index $i$, we have $e\big(G[N(x_i)]\big) = t_G(x_i) \ge \gamma n^2$. Since $F$ satisfies the neighborhood-forcing condition, there exists a graph $R$ such that $F$ is a spanning subgraph of $K_1\nabla R$ and $\operatorname{ex}(n,R) = o(n^2)$.

We inductively select a copy of $R$ inside each $G[N(x_i)]$, with the additional requirement that all chosen copies of $R$ are pairwise vertex-disjoint and avoid all vertices $x_1,\cdots,x_{s+1}$. At each step, the number of vertices we need to exclude is $O_s(1)$, and deleting these vertices removes at most $O_s(n)$ edges from the current neighborhood subgraph. Hence the remaining subgraph still contains at least
\[
\gamma n^2 - O_s(n)
\]
edges. Recalling
\[
\operatorname{ex}(n,R) = o(n^2),
\]
the residual neighborhood subgraph must contain a copy of $R$ whenever $n$ is sufficiently large. Each vertex $x_i$ together with its associated copy of $R$ forms a subgraph isomorphic to $F$. These $s+1$ copies of $F$ are pairwise vertex-disjoint, which yields a contradiction.

We therefore conclude that $|L| \le s$.
\end{proof}

\begin{claim}\label{claim::main2}
    If $|L| \le s-1$, then $\operatorname{ex}\bigl(n,K_3,(s+1)F\bigr) \le (\alpha + s\beta)n^2 + o(n^2)$. 
\end{claim}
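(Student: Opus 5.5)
Since $|U|=O_s(1)$, I would split the triangles of $G$ according to how many vertices they have in $W=V(G)\setminus U$, then treat the vertices of $U$ differently depending on whether they lie in $L$.

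First, the triangles with all three vertices in $W$ number $t(G[W])\le \operatorname{ex}(n,K_3,F)=\alpha n^2+o(n^2)$, because $G[W]$ is $F$-free. Second, the triangles using at least two vertices of $U$ number at most $\binom{|U|}{2}n=O_s(n)$.

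Third, consider the triangles with exactly one vertex $u\in U$ and two vertices in $W$. For a fixed $u$ their number is $e\bigl(G[N(u)\cap W]\bigr)$, and this is at most $t_G(u)$. I would bound these in two ways:
\begin{itemize}
\item If $u\notin L$, then $e\bigl(G[N(u)\cap W]\bigr)\le t_G(u)<\gamma n^2$.
\item If $u\in L$, then $e\bigl(G[N(u)\cap W]\bigr)\le e(G[W])\le \operatorname{ex}(n,F)=\beta n^2+o(n^2)$, again because $G[W]$ is $F$-free.
\end{itemize}
Summing over $u\in U$, with at most $s-1$ vertices in $L$ by the hypothesis $|L|\le s-1$, gives
\[
t(G)\le \alpha n^2+(s-1)\beta n^2+|U|\gamma n^2+o(n^2).
\]

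Finally, $|U|\le s|V(F)|$ is bounded independently of $\gamma$. I would choose $\gamma$ small enough that $s|V(F)|\gamma<\beta$. Then the bound above is strictly below $(\alpha+s\beta)n^2$ by a margin $cn^2$ with $c>0$. This gives the claimed inequality, and in fact shows that the case $|L|\le s-1$ cannot occur for large $n$, since it contradicts the lower bound from $G_0$.

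The one point to watch is this order of constants: $\gamma$ must be fixed after $s$ and $F$ but before $n$, so that the term $|U|\gamma n^2$ is controlled.
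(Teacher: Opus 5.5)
Your proof is correct and follows essentially the same route as the paper. Both arguments bound the triangles inside $W$ by $\alpha n^2+o(n^2)$, charge triangles through a vertex $u\in U\setminus L$ to $t_G(u)<\gamma n^2$, charge triangles through $u\in L\cap U$ with two vertices in $W$ to $e(G[W])\le\beta n^2+o(n^2)$, and absorb the remaining triangles into $O_s(n)$; the only difference is that you split first by the number of vertices a triangle has in $U$, while the paper splits first by whether it meets $U\setminus L$. Your explicit choice $s|V(F)|\gamma<\beta$ makes the paper's ``$\gamma$ sufficiently small'' step precise, and it gives a strict deficit of order $n^2$, which is exactly how this case is ruled out in the proof of Theorem~\ref{thm::main2}.
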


\begin{proof}
Let $R_0 = L \cap U$. If $|L| \le s-1$, then $|R_0| \le s-1$. We classify all triangles into three disjoint categories.

\begin{itemize}[labelindent=0pt,leftmargin=*,itemindent=0pt]
\item Triangles whose vertices all lie in $W$. Since the induced subgraph $G[W]$ is $F$-free, we get 
\[
t(G[W]) \le \operatorname{ex}\big(|W|,K_3,F\big) = \alpha n^2 + o(n^2).
\]
\item Triangles that contain at least one vertex $u \in U \setminus L$. The total number of such triangles satisfies
\[
\sum_{u \in U \setminus L} t_G(u) < |U|\gamma n^2 = O_s(\gamma n^2).
\]
\item The remaining triangles intersect $U$, but contain no vertices from $U \setminus L$. Thus any vertex they have inside $U$ must belong to $R_0 = L \cap U$. If such a triangle contains exactly one vertex from $R_0$ and its other two vertices lie in $W$, then the total number of such triangles is at most
\[
|R_0|e(G[W]) \le (s-1)\operatorname{ex}(|W|,F) = (s-1)\beta n^2 + o(n^2).
\]
If such a triangle contains at least two vertices from $R_0$, the total count is only $O_s(n)$.
\end{itemize}

Combining all cases, $t(G) \le \alpha n^2 + (s-1)\beta n^2 + O_s(\gamma n^2) + o(n^2)$. Since $\beta > 0$ and $\gamma > 0$ is taken sufficiently small, we obtain $t(G) \le (\alpha + s\beta)n^2 + o(n^2)$.
\end{proof}

\begin{claim}\label{claim::main3}
    If $|L| = s$, then $G - L$ is $F$-free.
\end{claim}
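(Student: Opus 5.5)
We argue by contradiction. Suppose $|L|=s$ and $G-L$ contains a copy $F_0$ of $F$. The plan is to extend $F_0$ to $s+1$ pairwise vertex-disjoint copies of $F$ in $G$, which would contradict the assumption that $G$ is $(s+1)F$-free. The extra $s$ copies will each be anchored at one vertex of $L$, exactly as in the proof of Claim~\ref{claim::main1}.

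Write $L=\{x_1,\dots,x_s\}$ and let $R$ be the graph supplied by the neighborhood-forcing condition. Thus $\operatorname{ex}(n,R)=o(n^2)$ and $F$ is a spanning subgraph of $K_1\nabla R$. We begin with the forbidden set $X_0=V(F_0)\cup L$, which has size $|F|+s=O_s(1)$.

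For $i=1,\dots,s$ in turn, we look at the neighborhood graph $G[N(x_i)]$. It has $t_G(x_i)\ge\gamma n^2$ edges. We delete from it the vertices of $X_{i-1}$, where $X_{i-1}$ consists of $X_0$ together with the vertices of the copies of $R$ already chosen. Since $|X_{i-1}|=O_s(1)$, this deletion removes at most $O_s(n)$ edges, so the remaining graph still has at least $\gamma n^2-O_s(n)$ edges. Because $\operatorname{ex}(n,R)=o(n^2)$, for $n$ sufficiently large (depending on $\gamma$, $s$ and $F$) the remaining graph contains a copy $R_i$ of $R$. We then set $X_i=X_{i-1}\cup V(R_i)$. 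The vertex $x_i$ is adjacent to every vertex of $R_i$, so $x_i$ together with $R_i$ spans a copy of $K_1\nabla R$, and hence contains a copy $F_i$ of $F$ with vertex set $\{x_i\}\cup V(R_i)$.

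By construction, the vertex sets of $F_0,F_1,\dots,F_s$ are pairwise disjoint:
\begin{itemize}
\item $F_0$ avoids $L$;
\item each $R_i$ avoids $V(F_0)$, all of $L$, and every earlier $R_j$.
\end{itemize}
This gives $(s+1)F\subseteq G$, which is the desired contradiction. Hence $G-L$ is $F$-free.

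I do not expect a real obstacle. The only point needing care is the bookkeeping of the excluded vertices: each $R_i$ must avoid $V(F_0)$ as well as all of $L$, not merely the earlier copies of $R$. Since the excluded set always has size $O_s(1)$, this costs only $O_s(n)$ edges against a neighborhood with $\gamma n^2$ edges.
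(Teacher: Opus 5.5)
Your proposal is correct and follows essentially the same argument as the paper: assume a copy $F_0$ of $F$ in $G-L$, then greedily find pairwise vertex-disjoint copies of $R$ in the neighborhoods $G[N(x)]$ for $x\in L$, each avoiding $V(F_0)\cup L$ and the earlier copies, which yields $(s+1)F$. Your explicit tracking of the excluded set $X_i$, together with the observation that each deletion costs only $O_s(n)$ edges against $\gamma n^2$, is exactly the paper's reasoning.
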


\begin{proof}
    Let $S = V(G) \setminus L$. Suppose for contradiction that $G[S]$ contains a subgraph $F_0 \cong F$. For every vertex $x \in L$, we have $e\big(G[N(x)]\big) = t_G(x) \ge \gamma n^2$.

By the same argument as in Claim \ref{claim::main1}, we may inductively select a copy of $R$ inside each $G[N(x)]$, with the requirement that each selected copy avoids $F_0$, the set $L$, and all previously chosen copies of $R$. At each step we delete only $O_s(1)$ vertices, so the remaining number of edges satisfies
\[
\gamma n^2 - O_s(n) > \operatorname{ex}(n,R),
\]
which guarantees that such a copy of $R$ can always be found.

Each vertex $x \in L$ together with its corresponding copy of $R$ forms a copy of $F$. These $s$ copies of $F$, together with $F_0$, constitute a copy of $(s+1)F$, yielding a contradiction.

We therefore conclude that $G[S]$ is $F$-free.
\end{proof}

By Claim \ref{claim::main1}, we know that $|L| \le s$. If $|L| \le s-1$, by Claim \ref{claim::main2}, we have $\operatorname{ex}\bigl(n,K_3,(s+1)F\bigr) = (\alpha+s\beta)n^2+o(n^2)$. Assume that $|L|=s$. Combining this with Claim \ref{claim::main3}, we deduce that $G - L$ is $F$-free. 

We partition all triangles in $G$ according to the number of vertices they share with $L$. The number of triangles entirely contained within $S$ is $t(G[S])$. The total number of triangles containing exactly one vertex from $L$ is at most $se(G[S])$. Triangles with exactly two or three vertices in $L$ contribute only $O_s(n)$. Hence
$$
t(G) \le t(G[S]) + s\cdot e(G[S]) + O_s(n).
$$

Since $G[S]$ is $F$-free, $t(G[S]) \le \alpha n^2 + o(n^2)$, $e(G[S]) \le \beta n^2 + o(n^2)$. It follows that $t(G) \le (\alpha + s\beta)n^2 + o(n^2)$. Therefore, $\operatorname{ex}\bigl(n,K_3,(s+1)F\bigr) = (\alpha+s\beta)n^2+o(n^2)$. This completes the proof. 
\end{proof}

\section{Proof of Theorem \ref{thm::main2}} 
\begin{proof}
We first establish the lower bound for $\operatorname{ex}\big(n,K_3,(s+1)F\big)$. Take an $F$-free graph $Q_{n-s}$ attaining the value $\psi_s(n-s;F)$, and construct $G_0 = K_s \nabla Q_{n-s}$. Since $Q_{n-s}$ contains no copy of $F$, every copy of $F$ inside $G_0$ must contain at least one vertex from $K_s$. Consequently, $G_0$ is $(s+1)F$-free. The number of triangles in $G_0$ satisfies
\begin{equation}
    \begin{aligned}
        t(G_0) &= t(Q_{n-s}) + s\cdot e(Q_{n-s}) + \binom{s}{2}(n-s) + \binom{s}{3}\\
        &= \psi_s(n-s;F) + \binom{s}{2}(n-s) + \binom{s}{3}.
        \end{aligned}
    \nonumber
\end{equation}        
This establishes the desired lower bound. 

We prove the upper bound by induction on $s$. For the base case $s=0$, the desired identity becomes $\operatorname{ex}(n,K_3,F) =\psi_0(n;F)=\max\big\{t(Q)\,\big| \, |V(Q)| = n,\ Q \text{ is } F\text{-free}\big\}$, which is immediate. Let $s \ge 1$, and suppose the statement holds for $s-1$. Let $G$ be an $n$-vertex $(s+1)F$-free graph with $t(G)=\operatorname{ex}\bigl(n,K_3,(s+1)F\bigr)$. If $G$ is $sF$-free, then by the induction hypothesis, 
        \begin{equation}
            t(G) \le \psi_{s-1}(n-s+1;F) + \binom{s-1}{2}(n-s+1) + \binom{s-1}{3}. 
            \nonumber
        \end{equation}
Recall that $\psi_s(m;F) = a_s m^2 + O(m)$, $\psi_{s-1}(m;F) = a_{s-1} m^2 + O(m)$, and $a_s > a_{s-1}$. It follows that $ t(G) < \psi_{s}(n-s;F) + \binom{s}{2}(n-s) + \binom{s}{3}$, which contradicts the lower bound on $\operatorname{ex}\bigl(n,K_3,(s+1)F\bigr)$. Therefore, the extremal graph must contain $s$ vertex-disjoint copies of $F$. Let $U$ denote the union of vertices of these copies, and set $W = V(G) \setminus U$. Fix a sufficiently small constant $\gamma = \gamma(s,F) > 0$, and define $L = \{v : t_G(v) \ge \gamma n^2\}$.

By the same argument as in the proof of Theorem \ref{thm::main1}, the neighborhood-forcing condition implies $|L|\le s$. Indeed, if \(L\) contained \(s+1\) vertices, then the induced neighborhoods of these vertices would contain pairwise vertex-disjoint copies of the graph \(R\), yielding \(s+1\) vertex-disjoint copies of \(F\), a contradiction.

We claim that in fact \(|L|=s\). Suppose first that \(|L|\le s-1\). Let \(R_0=L\cap U\). Then \(|R_0|\le s-1\). Repeating the classification of triangles from the proof of Theorem \ref{thm::main1}, the triangles containing at least one vertex of \(U\setminus L\) contribute at most \(O_s(\gamma n^2)\), and all remaining triangles are bounded by
\[
    t(G[W])+|R_0|e(G[W])+O_s(n)
    \le
    \psi_{|R_0|}(|W|;F)+O_s(n).
\]
Since \(\psi_q(m;F)\) is nondecreasing in \(q\) and \(|R_0|\le s-1\), this gives
\begin{equation}
    \begin{aligned}
        t(G)&\le \psi_{s-1}(|W|;F)+O_s(\gamma n^2)+O_s(n)\\
        &= a_{s-1}n^2+O_s(\gamma n^2)+O_s(n).
        \end{aligned}
    \nonumber
\end{equation}   
Since \(\gamma>0\) is chosen sufficiently small and \(a_s>a_{s-1}\), the last inequality contradicts the lower bound for sufficiently large \(n\). Therefore $|L|=s$. 

Let $S = V(G) \setminus L$. If $G[S]$ contains a copy of $F$, we may similarly use each $x\in L$ to find $s$ vertex-disjoint copies of $F$. Together with the copy of $F$ inside $G[S]$, these form a copy of $(s+1)F$, yielding a contradiction. Consequently, $G[S]$ is $F$-free.

We now partition all triangles according to the number of vertices they share with $L$:
\[
t(G) \le t(G[S]) + s\cdot e(G[S]) + \binom{s}{2}(n-s) + \binom{s}{3}.
\]
Since $G[S]$ is $F$-free and $|S|=n-s$, we have 
\[
t(G[S]) + s\cdot e(G[S]) \le \psi_s(n-s;F).
\]
It follows that
\[
t(G) \le \psi_s(n-s;F) + \binom{s}{2}(n-s) + \binom{s}{3}.
\]
This establishes the desired upper bound. Therefore, 
$$t(G) = \psi_s(n-s;F) + \binom{s}{2}(n-s) + \binom{s}{3}.$$ 

If the unique extremal graph for $\psi_s(n-s;F)$ is $Q_{n-s}$, we have $G[S] \cong Q_{n-s}$. Moreover, every term in the triangle count bound must be tight. The number of triangles containing exactly one vertex from $L$ is $s\cdot e(G[S])$. Consequently, for every $x\in L$ and every edge $ab\in E(G[S])$, we must have $xa,xb\in E(G)$.

If $Q_{n-s}$ has no isolated vertices, every vertex in $S$ lies on some edge, so each $x\in L$ must be adjacent to all vertices of $S$. Next consider the number of triangles containing exactly two vertices from $L$. This quantity is bounded above by
\[
\binom{s}{2}(n-s).
\]
Equality forces every pair $x,y\in L$ to be adjacent. Consequently, $G[L] \cong K_s$. We therefore conclude that $G \cong K_s \nabla Q_{n-s}$. This completes the proof.     
\end{proof}

\section{Applications of Theorems \ref{thm::main1} and \ref{thm::main2}}

In this section, we present several applications of Theorems \ref{thm::main1} and \ref{thm::main2}. We first consider disjoint unions of suspensions of paths and stars.

\begin{cor}\label{Application2}
   Let \(k\in\{4,5\}\) and let \(s\ge 1\) be an integer. For sufficiently large $n$, we have
$$
\operatorname{ex}\big(n,K_3,(s+1)\widehat{P}_k\big) = t(K_s \nabla H_{n-s}),
$$
and $K_s \nabla H_{n-s}$ is the unique extremal graph. 
\end{cor}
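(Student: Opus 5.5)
We apply Theorem \ref{thm::main2} with $F=\widehat P_k$, $k\in\{4,5\}$. The neighborhood-forcing condition holds with $R=P_k$, since $\widehat P_k=K_1\nabla P_k$ and $\operatorname{ex}(n,P_k)=O(n)=o(n^2)$. What remains is to determine $\psi_q(m;\widehat P_k)$ exactly for every $0\le q\le s$, and to show it has the form $a_qm^2+O(m)$ with $a_0<a_1<\dots<a_s$. We aim to prove that, for $q\ge 1$ and large $m$, the unique maximizer of $t(Q)+q\,e(Q)$ over $m$-vertex $\widehat P_k$-free graphs is $H_m$. For $q=0$ we have $\psi_0(m)=\lfloor m^2/8\rfloor$ by Lemma \ref{MHHM}. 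Since $t(H_m)=m^2/8+O(m)$ and $e(H_m)=m^2/4+O(m)$, this gives $a_q=\tfrac18+\tfrac q4$, which is strictly increasing in $q$.

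The key inequality compares $t(Q)+q\,e(Q)$ with $t(H_m)+q\,e(H_m)$. First, $t(Q)\le \lfloor m^2/8\rfloor$ and $t(H_m)=\lfloor m^2/8\rfloor$, where the latter follows from Lemma \ref{MHHM}, because $H_m$ is extremal. Second, $e(Q)\le \operatorname{ex}(m,\widehat P_k)=\max\{n_0n_1+\lfloor n_0/2\rfloor\}$ by Lemma \ref{ZWZZ}. We check that $e(H_m)$ equals this maximum, since $H_m$ is a complete bipartite graph with a maximum matching in one part, with the parts chosen optimally. We verify this case by case over $m \bmod 4$, expecting it to hold because the stated part sizes are exactly the optimizers of $n_0n_1+\lfloor n_0/2\rfloor$. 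Then $H_m$ maximizes both terms simultaneously, so $\psi_q(m)=t(H_m)+q\,e(H_m)$.

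For uniqueness, suppose $Q$ attains $\psi_q(m)$ with $q\ge1$. Then both inequalities must be tight, so $t(Q)=\operatorname{ex}(m,K_3,\widehat P_k)$ and $e(Q)=e(H_m)$. By the uniqueness clause of Lemma \ref{MHHM}, $H_m$ is the unique triangle-extremal graph with maximum edge number, so $Q\cong H_m$.

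Finally, $H_{n-s}$ has no isolated vertices because it contains a spanning complete bipartite graph. Theorem \ref{thm::main2} then yields
\[
\operatorname{ex}\bigl(n,K_3,(s+1)\widehat P_k\bigr)=\psi_s(n-s)+\binom s2(n-s)+\binom s3=t(K_s\nabla H_{n-s}),
\]
and shows that $K_s\nabla H_{n-s}$ is the unique extremal graph. The main obstacle is the arithmetic check that $e(H_m)$ equals the Lemma \ref{ZWZZ} maximum for each residue of $m$ modulo $4$. If this fails in some residue class, the fallback is to prove that a gain in $e(Q)$ over $e(H_m)$ forces a larger loss in $t(Q)$, but we expect the direct comparison to succeed.
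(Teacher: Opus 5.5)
Your proposal is correct and is essentially the paper's proof: take $R=P_k$, bound $t(Q)+q\,e(Q)$ termwise by Lemmas \ref{MHHM} and \ref{ZWZZ}, obtain uniqueness from the uniqueness clause of Lemma \ref{MHHM}, and invoke Theorem \ref{thm::main2}. You also spell out the hypotheses the paper leaves implicit ($a_q=\frac18+\frac q4$ strictly increasing, and $H_{n-s}$ having no isolated vertices). The arithmetic check you flag, which the paper simply asserts, does go through: for $m=4q,\,4q+1,\,4q+2,\,4q+3$ one gets $e(H_m)=4q^2+q,\ 4q^2+3q,\ 4q^2+5q+1,\ 4q^2+7q+3$, each equal to $\max\{n_0n_1+\lfloor n_0/2\rfloor : n_0+n_1=m\}$, so no fallback is needed.
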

\begin{proof}
    Recall that $k \in \{4,5\}$. Let $R = P_k$. Since $\widehat{P}_k = K_1 \nabla P_k$, and $\operatorname{ex}(n,P_k) = O(n)$, the neighborhood-forcing condition holds. Let $Q_{n-s}$ be an $(n-s)$-vertex $\widehat{P}_k$-free graph. By Lemma \ref{MHHM}, for any integer $q \ge 1$, we have
\begin{equation}
t(Q_{n-s})+qe(Q_{n-s})\le \operatorname{ex}(n-s,K_3,\widehat{P}_k)+q\cdot \operatorname{ex}(n-s,\widehat{P}_k)= t(H_{n-s})+q\cdot e(H_{n-s}), 
\nonumber
\end{equation}
and the above equality holds if and only if $Q_{n-s} = H_{n-s}$. Thus, $\psi_s(n-s;\widehat{P}_k)=t(H_{n-s})+s\cdot e(H_{n-s})$, and the unique extremal graph for $\psi_s(n-s;\widehat{P}_k)$ is $H_{n-s}$. By Theorem \ref{thm::main2}, 
$$
\operatorname{ex}\big(n,K_3,(s+1)\widehat{P}_k\big) = \psi_s(n-s;\widehat{P}_k) + \binom{s}{2}(n-s) + \binom{s}{3},
$$
and the unique extremal graph is $K_s \nabla H_{n-s}$.
\end{proof}

\begin{cor}\label{Application3}
For integers $s \ge 0$, we have
$$
\operatorname{ex}\big(n,K_3,(s+1)\widehat{P}_6\big) = \frac{s+1}{4}n^2 + o(n^2)
$$
for sufficiently large $n$. 
\end{cor}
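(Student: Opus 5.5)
We apply Theorem \ref{thm::main1} with $F=\widehat P_6$, $\alpha=\beta=\tfrac14$; the conclusion $(\alpha+s\beta)n^2=\tfrac{s+1}{4}n^2$ is then exactly the claimed value. So the plan is simply to verify the hypotheses of Theorem \ref{thm::main1} one at a time.

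\emph{Neighborhood-forcing condition.} Take $R=P_6$. By definition $\widehat P_6=K_1\nabla P_6$, so $F$ is a spanning subgraph of $K_1\nabla R$. Moreover $\operatorname{ex}(n,P_6)=O(n)=o(n^2)$, for instance by Erd\H{o}s--Gallai, or by Lemma \ref{Mc}, since $P_6$ has diameter $5$ and the linear bound for paths is classical anyway.

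\emph{Triangle density.} Lemma \ref{MM} gives $\operatorname{ex}(n,K_3,\widehat P_6)=\tfrac14 n^2+o(n^2)$, so $\alpha=\tfrac14\ge 0$.

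\emph{Edge density.} Since $\widehat P_6$ contains a triangle, it is non-bipartite with chromatic number $3$, and Erd\H{o}s--Stone--Simonovits gives $\operatorname{ex}(n,\widehat P_6)=\tfrac14n^2+o(n^2)$. Hence $\beta=\tfrac14>0$. Alternatively, the upper bound follows from the chromatic number, and the lower bound from $K_{\lceil n/2\rceil,\lfloor n/2\rfloor}$, which is $\widehat P_6$-free because it is triangle-free.

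\emph{Simultaneous construction.} Take $Q_m=H'_m$, the asymptotic version of Construction $3$ described before Lemma \ref{MM}. It was recorded there that $e(H'_m)=\tfrac{m^2}{4}+O(m)$ and $t(H'_m)=\tfrac{m^2}{4}+O(m)$, and that $H'_m$ is $\widehat P_6$-free.

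\emph{Main obstacle.} The one step needing care is the $\widehat P_6$-freeness of $H'_m$, which the paper only calls straightforward. I would check it directly. Suppose the apex is $v\in B$. Then $N(v)=A$, and $G[A]$ is a disjoint union of triangles, so its longest path has $3$ vertices and there is no $P_6$. Suppose instead $v\in A$. Then $N(v)$ consists of $B$, which is independent, together with two vertices of $A$ forming an edge. A path in $G[N(v)]$ must therefore alternate through those two $A$-vertices, so it has at most $5$ vertices, which again rules out $P_6$.

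With all hypotheses verified, Theorem \ref{thm::main1} yields the corollary for every $s\ge0$.
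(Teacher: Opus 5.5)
Your proposal is correct and matches the paper's proof: you apply Theorem \ref{thm::main1} with $R=P_6$, $\alpha=\beta=\tfrac14$ (from Lemma \ref{MM} and Erd\H{o}s--Stone--Simonovits) and $Q_m=H'_m$, and you additionally check the $\widehat P_6$-freeness of $H'_m$, which the paper only calls straightforward; that check should also note that an uncovered vertex $v\in A$ has $N(v)=B$, which is independent. One slip: Lemma \ref{Mc} only covers trees of diameter at most four, so it does not apply to $P_6$ (diameter five), and the bound $\operatorname{ex}(n,P_6)\le 2n$ must come from Erd\H{o}s--Gallai alone.
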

\begin{proof}
    Let $R = P_{6}$. Since $\widehat{P}_6 = K_1 \nabla P_6$, and $\operatorname{ex}(n,P_{6}) = O(n)$, the neighborhood-forcing condition holds. 

By the classical Erd\H{o}s–Stone–Simonovits theorem, we have $\operatorname{ex}(n,\widehat{P}_6) = \frac{n^2}{4} + o\left(n^2\right)$. Note that $t(H_{n}')=\frac{n^2}{4}+O(n)$, and $e(H_{n}')=\frac{n^2}{4}+O(n)$. We therefore take $\alpha = \frac{1}{4}$, $\beta = \frac{1}{4}$ in Theorem \ref{thm::main1}, which yields $\operatorname{ex}\big(n,K_3,(s+1)\widehat{P}_6\big) = \frac{s+1}{4}n^2 + o(n^2)$.
\end{proof}

\begin{cor}\label{Application4}
For integers \(r\ge 1\) and \(s\ge 0\), we have
$$
\operatorname{ex}\big(n,K_3,(s+1)\widehat{K}_{1,r}\big)=\frac{s}{4}n^2+o(n^2).
$$
\end{cor}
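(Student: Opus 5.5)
We will apply Theorem \ref{thm::main1} with $F=\widehat K_{1,r}$, $\alpha=0$ and $\beta=\frac14$.

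\textbf{Step 1 (neighborhood-forcing).} Take $R=K_{1,r}$. Then $\widehat K_{1,r}=K_1\nabla K_{1,r}$, so $F$ is a spanning subgraph of $K_1\nabla R$. Moreover $\operatorname{ex}(n,K_{1,r})\le \frac{(r-1)n}{2}=o(n^2)$, since a graph of maximum degree below $r$ has fewer than $rn/2$ edges. Hence the condition holds.

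\textbf{Step 2 (single-graph densities).} By Lemma \ref{AS}, $\operatorname{ex}(n,K_3,\widehat K_{1,r})=o(n^2)$, so $\alpha=0$. The graph $\widehat K_{1,r}$ contains a triangle, so it is not bipartite. Its chromatic number is $3$, because $K_{1,r}$ is bipartite and the apex vertex adds one colour. The Erd\H{o}s--Stone--Simonovits theorem therefore gives $\operatorname{ex}(n,\widehat K_{1,r})=\frac{n^2}{4}+o(n^2)$, so $\beta=\frac14>0$.

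\textbf{Step 3 (simultaneous construction).} Take $Q_m=T_2(m)$. It is bipartite, hence $\widehat K_{1,r}$-free. It has $t(Q_m)=0=\alpha m^2$ and $e(Q_m)=\lfloor m^2/4\rfloor=\beta m^2+o(m^2)$, so it attains both extremal densities at once.

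\textbf{Conclusion.} Theorem \ref{thm::main1} now gives $\operatorname{ex}(n,K_3,(s+1)\widehat K_{1,r})=(0+\frac s4)n^2+o(n^2)$ for every $s\ge0$. The case $s=0$ is just Lemma \ref{AS}.

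\textbf{Main obstacle.} The only point needing care is the hypothesis $\beta>0$, which is used in the induction inside Theorem \ref{thm::main1}. It is satisfied here, since $\beta=\frac14$, even though $\alpha=0$. Lemma \ref{AS} holds for $r\ge1$, and for $r=1$ the graph $\widehat K_{1,1}$ is the triangle; the statement then reads $\operatorname{ex}(n,K_3,(s+1)K_3)=\frac s4n^2+o(n^2)$. This agrees with the odd-cycle case via Lemma \ref{HYZ}, so no separate argument is needed.
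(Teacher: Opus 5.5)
Your proposal is correct and follows the paper's proof step for step. You use the same choice $R=K_{1,r}$, Lemma~\ref{AS} for $\alpha=0$, Erd\H{o}s--Stone--Simonovits for $\beta=\frac14$, and $Q_m=T_2(m)$ as input to Theorem~\ref{thm::main1}. Your extra remarks, namely the explicit bound $\operatorname{ex}(n,K_{1,r})\le\frac{(r-1)n}{2}$ and the consistency check at $r=1$, are accurate but not needed.
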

\begin{proof}
    Let \(R=K_{1,r}\). Since $\widehat{K}_{1,r}=K_1\nabla K_{1,r}$, and \(\operatorname{ex}(n,K_{1,r})=o(n^2)\), the neighborhood-forcing condition holds.

We now verify the assumptions of Theorem \ref{thm::main1}. Since \(\chi(\widehat{K}_{1,r})=3\), the Erd\H{o}s–Stone –Simonovits theorem gives $\operatorname{ex}(n,\widehat{K}_{1,r})=\frac{n^2}{4}+o(n^2)$. By Lemma \ref{AS}, we have $\operatorname{ex}(n,K_3,\widehat{K}_{1,r})=o(n^2)$. Thus we may take  \(\alpha=0\) and \(\beta=\frac14\).

Finally, let \(Q_m=T_2(m)\). Since \(T_2(m)\) is bipartite, it is \(\widehat{K}_{1,r}\)-free.
Moreover,
$$
t(Q_m)=0=o(m^2),~e(Q_m)=\left\lfloor \frac{m^2}{4}\right\rfloor=\frac{m^2}{4}+O(1).
$$
Hence \(Q_m\) simultaneously attains the required triangle and edge densities
with
$$
\alpha=0,~\beta=\frac14.
$$
By Theorem \ref{thm::main1}, we obtain
$$
\operatorname{ex}\big(n,K_3,(s+1)\widehat{K}_{1,r}\big)=\left(0+s\cdot\frac14\right)n^2+o(n^2)=\frac{s}{4}n^2+o(n^2).
$$
This completes the proof.
\end{proof}

The following result was proved by Hou, Yang, and Zeng \cite{HYZ24}. We show that it also follows from Theorem \ref{thm::main2} and Lemma \ref{HYZ}.

\begin{cor}\label{Application1}
    For integers $s \ge 1$ and $k \ge 1$, we have
$$
\operatorname{ex}\bigl(n, K_3, (s + 1) C_{2k+1}\bigr) = s\left\lfloor \frac{(n - s)^2}{4} \right\rfloor + (n - s)\binom{s}{2} + \binom{s}{3}
$$
for sufficiently large $n$, and $K_s \nabla T_2(n - s)$ is the unique extremal graph.
\end{cor}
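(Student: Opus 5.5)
We derive the corollary from Theorem \ref{thm::main2} with $F=C_{2k+1}$. Three things must be checked: the neighborhood-forcing condition, exact values of $\psi_q(m;C_{2k+1})$ for $0\le q\le s$ together with the growth constants $a_0<\cdots<a_s$, and uniqueness of the extremal graph for $\psi_s(m;C_{2k+1})$ with no isolated vertices.

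\textbf{Neighborhood forcing.} Take $R=P_{2k}$. Since $C_{2k+1}\subseteq K_1\nabla P_{2k}$ and both have $2k+1$ vertices, $C_{2k+1}$ is a spanning subgraph of $K_1\nabla P_{2k}$. Moreover $\operatorname{ex}(n,P_{2k})=O(n)=o(n^2)$.

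\textbf{The case $q=0$.} We have $\psi_0(m;C_{2k+1})=\operatorname{ex}(m,K_3,C_{2k+1})$. For $k=1$ this is $0$. For $k\ge2$ it is $o(m^2)$; this follows from the triangle removal lemma, or from the known bound $\operatorname{ex}(m,K_3,C_{2k+1})=O(m^{1+1/k})$ of Győri–Li. Theorem \ref{thm::main2} nominally asks for the exact value of $\psi_0$, but the proof only uses the growth rate, via the induction hypothesis $\psi_{s-1}$ when $G$ is $sF$-free. The one place an $O(m)$ error term matters is the inequality $\psi_{s-1}(n-s+1)+\dots<\psi_s(n-s)+\dots$, and there an $o(m^2)$ error suffices because $a_s>a_{s-1}$. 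So we set $a_0=0$ with the asymptotic form $\psi_0=o(m^2)$ and note that the argument goes through. Alternatively, one can apply the theorem only with $s\ge1$ and handle the $sF$-free case directly by induction on the corollary itself, with $s=0$ as a separate base.

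\textbf{The case $q\ge1$.} For an $m$-vertex $C_{2k+1}$-free graph $Q$,
\[
t(Q)+q\,e(Q)=\bigl(t(Q)+e(Q)\bigr)+(q-1)e(Q)\le \left\lfloor \tfrac{m^2}{4}\right\rfloor+(q-1)\left\lfloor \tfrac{m^2}{4}\right\rfloor .
\]
The first bracket is controlled by Lemma \ref{HYZ}, and the second by $\operatorname{ex}(m,C_{2k+1})=\lfloor m^2/4\rfloor$ for large $m$, the classical result for odd cycles. Equality in Lemma \ref{HYZ} forces $Q=T_2(m)$, which also attains the edge bound. Hence
\[
\psi_q(m;C_{2k+1})=q\left\lfloor \tfrac{m^2}{4}\right\rfloor, \qquad a_q=\tfrac q4,
\]
so the $a_q$ are strictly increasing, and $T_2(m)$ is the unique extremal graph. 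It has no isolated vertices for $m\ge2$.

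\textbf{Conclusion.} Theorem \ref{thm::main2} now gives
\[
\operatorname{ex}\bigl(n,K_3,(s+1)C_{2k+1}\bigr)=s\left\lfloor\tfrac{(n-s)^2}{4}\right\rfloor+\binom s2(n-s)+\binom s3,
\]
with unique extremal graph $K_s\nabla T_2(n-s)$.

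The main obstacle is the $q=0$ step. There $\psi_0$ is not of the form $a_0m^2+O(m)$ with a known exact value, so we must check that the proof of Theorem \ref{thm::main2} only uses $a_0<a_1$ and an $o(m^2)$ error at that step.
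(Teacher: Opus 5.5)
Your proposal is correct and is the paper's argument. You take $R=P_{2k}$ for neighborhood forcing, use Lemma~\ref{HYZ} to get $\psi_q(m;C_{2k+1})=q\lfloor m^2/4\rfloor$ with $T_2(m)$ as the unique extremizer for $q\ge 1$, and then apply Theorem~\ref{thm::main2}. Your separate treatment of $q=0$ improves on the paper, which only handles $q\ge1$: for $k\ge2$ the quantity $\psi_0(m;C_{2k+1})=\operatorname{ex}(m,K_3,C_{2k+1})$ is neither known exactly nor $O(m)$ (it is $\Theta(m^{3/2})$ for $k=2$), and you correctly observe that the proof of Theorem~\ref{thm::main2} only ever uses $\psi_0=o(m^2)$, which the Gy\H{o}ri--Li bound supplies.
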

\begin{proof}
    Let $R = P_{2k}$. Since $C_{2k+1} \subseteq K_1 \nabla P_{2k}$, and $\operatorname{ex}(n,P_{2k}) = O(n)$, the neighborhood-forcing condition holds.

Let $Q_{n-s}$ be an $(n-s)$-vertex $C_{2k+1}$-free graph. By Lemma \ref{HYZ}, for any integer $q \ge 1$,
\[
t(Q_{n-s}) + q\cdot e(Q_{n-s}) = \big(t(Q_{n-s})+e(Q_{n-s})\big)+(q-1)e(Q_{n-s}) \le q\left\lfloor \frac{m^2}{4} \right\rfloor.
\]
The equality holds if and only if $Q_{n-s} = T_2(n-s)$. Hence $\psi_q(n-s;C_{2k+1}) = q\lfloor \frac{(n-s)^2}{4} \rfloor$. Substitute this into Theorem \ref{thm::main2}. For $s \ge 1$, we obtain
\[
\operatorname{ex}\big(n,K_3,(s+1)C_{2k+1}\big) = s\left\lfloor \frac{(n-s)^2}{4} \right\rfloor + \binom{s}{2}(n-s) + \binom{s}{3}.
\]
The unique extremal graph is $K_s \nabla T_2(n-s)$.
\end{proof}

\section*{Acknowledgements}

This work was supported by the National Natural Science Foundation of China (No. 12331012) and the Hunan Provincial Innovation Foundation for Postgraduate (No. CX20250747).

\end{document}